\definecolor{mred}{rgb}{0.83, 0.0, 0.0}
\definecolor{darkspringgreen}{rgb}{0.09, 0.45, 0.27}
\definecolor{ruby}{rgb}{0.88, 0.07, 0.37}
\theoremstyle{plain}
\theoremstyle{definition}
\def\R{\mathbb{R}}
\def\Q{\mathbb{Q}}
\def\C{\mathbb{C}}
\def\Z{\mathbb{Z}}
\def\N{\mathbb{N}}
\def\O{\mathcal{O}}
\def\nm{\lVert\cdot\rVert}
\def\deg{{\mathrm{deg}}}
\def\ot{\otimes}
\def\shfL{\mathcal{L}}
\def\shfM{\mathscr{M}}
\def\ardeg{\widehat{\mathrm{deg}}}
\def\ovl{\overline}
\def\scrX{\mathcal{X}}
\def\colorsout#1{\bgroup\markoverwith{\textcolor{#1}{\rule[0.5ex]{2pt}{0.7pt}}}\ULon}
\def\coloruline#1{\bgroup\markoverwith{\textcolor{#1}{\rule[-0.5ex]{2pt}{0.7pt}}}\ULon}
\def\mO{\mathcal{O}}
\newcommand{\mb}[1]{\mathbb{#1}}
\newcommand{\mr}[1]{\mathrm{#1}}
\newcommand{\mc}[1]{\mathcal{#1}}
\newcommand{\ov}{\overline}
\newcommand\ra{\rightarrow}
\newcommand{\cra}{\stackrel{\sim}\rightarrow}
\newcommand\wt{\widetilde}
\newcommand\Spe{\mathrm{Spec\,}}
\newcommand\dra{\dashrightarrow}
\newcommand\reldeg{\mathrm{reldeg}}
\newcommand\di{\mathrm{div}}
\theoremstyle{thmstyleone}
\newtheorem{theo}{Theorem}[section]
\newtheorem{coro}[theo]{Corollary}
\newtheorem{prop}[theo]{Proposition}
\newtheorem{lemm}[theo]{Lemma}
\newtheorem{rema}[theo]{Remark}
\newtheorem{conj}[theo]{Conjecture}
\newtheorem{exem}[theo]{Example}
\theoremstyle{definition}
\newtheorem{defi}[theo]{Definition}
\newtheorem{defiprop}[theo]{Definition-Proposition}
\newcommand\tint{\mathop{\mathpalette\tb@int{t}}\!\int}
\newcommand\bint{\mathop{\mathpalette\tb@int{b}}\!\int}
\newcommand\tb@int[2]{%
  \sbox\z@{$\m@th#1\int$}%
  \if#2t%
    \rlap{\hbox to\wd\z@{%
      \hfil
      \vrule width .35em height \dimexpr\ht\z@+1.4pt\relax depth -\dimexpr\ht\z@+1pt\relax
      \kern.05em 
    }}
  \else
    \rlap{\hbox to\wd\z@{%
      \vrule width .35em height -\dimexpr\dp\z@+1pt\relax depth \dimexpr\dp\z@+1.4pt\relax
      \hfil
    }}
  \fi
}
\newcommand*\suppresschapternumber{%
  \let\@makechapterhead\@makeschapterhead
  \patchcmd{\@chapter}
    {\protect\numberline{\thechapter}}
    {}
    {}{}%
}
\newcommand*\removedotbetweenchapterandsection{%
  \renewcommand\thesection{\thechapter\@arabic\c@section}%
}
\numberwithin{equation}{section}
\title{Arithmetic degrees of dynamical systems over fields of characteristic zero}
\author{Wenbin LUO}
\author{Jiarui SONG}
\address{School of Mathematical Sciences, Shanghai Key Laboratory of PMMP, East China Normal University, 500 Dongchuan Road, Shanghai 200241, People’s Republic of China}
\email{luowenbin\_math@outlook.com}
\address{School of Mathematical Sciences, Peking University, Road Yiheyuan No.5, Beijing 100871, China}
\email{soyo999@pku.edu.cn}
\date{\today}
\begin{document}
\begin{abstract}
In this article, we generalize the arithmetic degree and its related theory to dynamical systems defined over an arbitrary field $\mathbf{k}$ of characteristic $0$. We first consider a dynamical system $(X,f)$ over a finitely generated field $K$ over $\mathbb{Q}$, where we introduce the arithmetic degrees $\alpha(f,\cdot)$ for $\overline{K}$-points by using Moriwaki heights. We study the arithmetic dynamical degree of $(X,f)$ and establish the relative degree formula. The relative degree formula gives a proof of the fundamental inequality, that is, the upper arithmetic degree $\overline{\alpha}(f,x)$ is less than or equal to the first dynamical degree $\lambda_1(f)$ in this setting. By taking spread-outs, we extend the definition of arithmetic degrees to dynamical systems over the field $\mathbf k$. We demonstrate that our definition is independent of the choice of the spread-out. Moreover, in this setting, we prove certain special cases of the Kawaguchi--Silverman conjecture. A main novelty of this paper is that, we give a characterization of arithmetic degrees of "transcendental points" in the case $\mathbf{k}$ is a universal domain of characteristic zero, from which we deduce that $\alpha(f,x)=\lambda_1(f)$ for very general $x\in X(\mathbf{k})$ when $f$ is an endomorphism.
\end{abstract}

\maketitle

\section*{Introduction}\label{sec_intro}
Algebraic dynamical systems focus on self-maps of an algebraic variety and their iterations. For example, let $f: X\dashrightarrow X$ be a dominant rational self-map of a projective normal variety $X$ over a field $\mathbf{k}$. We have some numerical estimates on the complexity of $f$. For each $n\in \N_+$, consider the following resolution of indeterminacy:
\[
\begin{tikzcd}
 & \Gamma_{f^n} \arrow{dl}[swap]{\pi_{n,1}} \arrow{dr}{\pi_{n,2}}\\
X \arrow[dashed]{rr}{f^n} && X ,
\end{tikzcd}
\]
where $\Gamma_{f^n}$ is the normalization of the graph of $f^n$ in $X\times X$, $\pi_{n,1}$ is a birational morphism to the first coordinate, and $\pi_{n,2}$ is a generically finite morphism to the second coordinate. Fix a big and nef line bundle $H$ on $X$. The $k$-th \textit{dynamical degree} is defined as \begin{equation}\label{eq_dyn_deg}\lambda_k(f):=\lim\limits_{n\rightarrow\infty}(\pi_{n,2}^* H^k\cdot \pi_{n,1}^* H^{\dim X-k})^{1/n},
\end{equation}
where the convergence of the limit is proved in \cite[Corollary 7]{DS2005Une}, \cite[Theorem 1.1]{Tru2020Rela}, and \cite[Theorem 1]{dang_degrees_2020}.

On the other hand, for any $x\in X(\ovl {\mathbf k})$, we consider the behavior of $f^n(x)$ as $n\rightarrow+\infty$. We denote 
$$X_f(\ovl{\mathbf k}):=\{x\in X(\ovl{\mathbf k})\mid f\text{ is well-defined at } f^n(x)\text{ for each }n\in\N\}.$$ We are mainly interested in the case that the $f$-orbit $\mO_f(x):=\{f^n(x)\mid n\in\N\}$ is Zariski dense in $X$.
If $\mathbf{k}$ moreover admits some arithmetic structures, for example, $\mathbf{k}=\ovl {\mb{Q}}$ or $\mathbf{k}$ is a function field $\mathsf{k}(t)$ where $\mathsf k$ is an arbitrary field, we may consider the \textit{arithmetic degrees}
\begin{equation}\label{eq_arith_deg}
    \begin{aligned}
    \ovl\alpha(f,x):=\limsup\limits_{n\rightarrow \infty}(\max\{h(f^n(x)),1\})^{1/n},\\
    \underline\alpha(f,x):=\liminf\limits_{n\rightarrow \infty}(\max\{h(f^n(x)),1\})^{1/n},
\end{aligned}
\end{equation} where $h(\cdot)$ is a Weil height function associated with an ample divisor. 
If $\ov\alpha(f,x)=\underline \alpha(f,x)$, then we simply denote it by $\alpha(f,x)$.
The Kawaguchi--Silverman conjecture proposed in \cite{KS2016dyna} asserts that $\ov\alpha(f,x)=\underline \alpha(f,x)$, and $\alpha(f,x)=\lambda_1(f)$ if the $f$-orbit $\mO_f(x)$ is Zariski-dense in $X$.

For $\mathbf{k}=\ovl \Q$, the dynamical system must be defined over some number field, excluding the study of transcendental points. If $\mathbf{k}$ is a function field, the height function is typically the geometric height \cite[Definition 2.2]{xie2023poschar}, lacking the Northcott property if the base field is infinite, leading to a counterexample \cite[Example 3.8]{Matsuzawa2018zero} of the Kawaguchi--Silverman conjecture. 

In \cite{moriwaki2000arithmetic}, Moriwaki defined an arithmetic height function over finitely generated fields over $\Q$, which possesses various good properties, such as the Northcott property for big polarizations \cite[Theorem A]{moriwaki2000arithmetic}. We refer the reader to \textit{loc. cit.} for the precise definition. Thus we propose to replace the geometric height with the Moriwaki height.

For a field $\mathbf{k}$ of characteristic $0$, we define the arithmetic degrees by replacing $h$ in \eqref{eq_arith_deg} with a Moriwaki height over some chosen subfield $K\subset\mathbf{k}$ which is finitely generated over $\Q$. To be more specific, we take such a field $K$ over which $X$, $f$ and $x$ are defined. Moreover, we will prove that the arithmetic degrees are intrinsic, i.e. they are independent of \begin{itemize}
    \item the choice of the field $K$ of definition (Proposition \ref{prop_arith_deg_intr}),
    \item the arithmetic structure of $K$, i.e. an arithmetic variety whose function field is $K$, and a big and nef polarization on it, which are introduced by Moriwaki (Remark \ref{rema_ar_deg}). 
\end{itemize} 
This shows that the arithmetic degree depends only on the dynamical system. In this article, we generalize results over $\ov\Q$ in this setting. To summarize, we obtain the following:  
\begin{theo}[Proposition \ref{prop_arith_deg_intr}, Theorem \ref{theo_fundamental_ineq} and Proposition \ref{theo_KSC_C}]\label{thm_intro_ineq}
    For any $x\in X_f(\mathbf{k})$, $\ov\alpha(f,x)$ and $\underline\alpha(f,x)$ are well-defined. We have the fundamental inequality 
    \[\ov\alpha(f,x)\leq \lambda_1(f).\] In particular, if $f$ is polarized, i.e. $f^*L=\lambda_1(f)L$ for some ample line bundle $L$, then $\alpha(f,x)=\lambda_1(f)$ if the $f$-orbit of $x$ is Zariski-dense. 
\end{theo} 
Previous proofs of the Kawaguchi--Silverman conjecture primarily rely on the height machinery and the Northcott property; see \cite{matsuzawa2023recent} for details. We believe that these results can be recovered within our framework. Furthermore, the previously known counterexamples to the Kawaguchi–Silverman conjecture over function fields no longer serve as counterexamples in our setting.

In \cite{DGH2022high,song2023high}, an arithmetic analogue of the dynamical degree is introduced under the framework of Arakelov geometry, defined similarly to \eqref{eq_dyn_deg} using arithmetic line bundles and arithmetic intersection numbers instead. We generalize this notion for dynamical systems over finitely generated fields over $\mathbb{Q}$; refer to Definition \ref{def_ar_dyn_deg} for more details. In this case, we exploit the full power of the high codimensional Yuan's inequality in \cite[Theorem 1.3]{song2023high} to show the following:
\begin{theo}[Definition-Proposition \ref{coro_dyn_arith_deg} and Theorem \ref{rel_deg_fgf}]\label{theo_intro_reladeg}
    For a normal projective variety $X$ and a dominant rational self-map $f:X \dashrightarrow X$ defined over a finitely generated field $K$ over $\mb{Q}$, the $k$-th \textit{arithmetic dynamical degree} $\widehat \lambda_{k}(f)$ for $k\in \mb{Z}_+$ is well-defined, and $$\widehat{\lambda}_k(f)=\max\{\lambda_k(f),\lambda_{k-1}(f)\}.$$
\end{theo}
Note that Theorem \ref{thm_intro_ineq} follows directly from this formula.

Another significant advantage of our approach is the applicability to "transcendental points". We focus on the case that $\mathbf{k}$ is a universal domain, i.e. $\mathbf{k}$ is an algebraically closed field over $\Q$, and of infinite transcendental degree (for example, $\mathbf{k}=\C$). 

The following definition of transcendental points follows \cite[Section 4.1]{XY2023partial}.

\begin{defi}\label{def_trans}
Let $X_K$ be a variety over a subfield $K\subset \mathbf{k}$ and $X=X_K\times_{\Spe K}\Spe\mathbf{k}$. Here $X$ does not need to be irreducible. Denote $\pi_K: X\ra X_K$ the canonical projection. For a point $x\in X(\mathbf{k})$, denote $Z(x)_K$ the Zariski closure of $\pi_K(x)$ in $X_{K}$ and $Z(x)=\pi_K^{-1}(Z(x)_K)$. We call $\dim Z(x)$ the \emph{transcendence degree} of $x$ with respect to $X_K\ra \Spe K$. We say $x$ is \emph{transcendental} if $\dim Z(x)\geq 1$ and call it a \emph{purely transcendental point} if $\dim Z(x)=\dim X$.    
\end{defi}

We expect that purely transcendental points exhibit the maximal arithmetic complexity. We prove the following theorem.
\begin{theo}[Corollary \ref{coro_arithdeg_constant} and Theorem \ref{theo_verygeneral}]
Let $X$ be a normal projective variety and $f: X\dra X$ be a dominant rational self-map over a universal domain $\mathbf{k}$. Suppose $(X,f)$ is defined over a finitely generated field $K\subset \mathbf{k}$ over $\mb{Q}$. For $x,y\in X_f(\mathbf{k})$ satisfying $Z(x)_{K}=Z(y)_{K}$, we have 
\[\ov{\alpha}(f,x)=\ov{\alpha}(f,y) \text{ and } \underline{\alpha}(f,x)=\underline{\alpha}(f,y).\]

In particular, there exists a countable set $S$ of proper subvarieties of $X$, such that on $\in X(\mathbf{k})\setminus \bigcup\limits_{V\in S}V(\mathbf{k})$, the arithmetic degrees $\ov{\alpha}(f,\cdot)$ and $\underline{\alpha}(f,\cdot)$ are constant functions. Furthermore, if $f$ is an endomorphism, then $\alpha(f,x)=\lambda_1(f)$ for all $x\in X(\mathbf{k})\setminus \bigcup\limits_{V\in S}V(\mathbf{k})$.
\end{theo}

\subsection*{Comparison with previous works}
This paper is primarily inspired by \cite{Matsuzawa2018zero,ohnishi2022arakelov}. In \cite{Matsuzawa2018zero}, the arithmetic degrees are defined via geometric heights over function fields of characteristic $0$. The Kawaguchi--Silverman conjecture is not valid even for the case of endomorphism of varieties under the default of Northcott property. This is partly addressed in \cite{ohnishi2022arakelov}, by considering dynamical systems over adelic curves introduced in \cite{adelic} that admit the Northcott property. Note that finitely generated fields over $\Q$, equipped with the Moriwaki heights, which we are mainly concerned about, indeed fall into this category. Compared to \cite{ohnishi2022arakelov}, we show that the arithmetic degree does not depend on the choice of the field of definition. Moreover, instead of only considering algebraic points, we are able to deal with transcendental points by taking larger fields of definition. 

The main techniques of the paper come from \cite{moriwaki2000arithmetic} and \cite{song2023high}, which are mainly built on the arithmetic intersection theory.
\subsection*{Organization of the paper}
In Section \ref{Sec_height}, we recall the definition of Moriwaki heights over finitely generated fields. Moreover, we prove some fundamental properties which will be used to prove the intrinsicness of arithmetic degrees.

In Section \ref{Sec_arithdyndeg}, we give the definition of the arithmetic dynamical degree $\widehat{\lambda}_k(f)$ and the arithmetic degree $\alpha(f,x)$ for $(X,f)$ defined over a finitely generated field $K$. We prove the relative degree formula Theorem \ref{theo_intro_reladeg}
and establish the fundamental inequality $\ov\alpha(f,x)\leq \lambda_1(f)$.

In Section \ref{Sec_char0}, we discuss $(X,f)$ defined over an arbitrary field $\mathbf{k}$ of characteristic $0$. By taking a spread-out, we define the arithmetic degrees of a $\mathbf{k}$-point. Moreover, we are able to show that it is independent of the choice of spread-out. Further, we demonstrate that the Northcott property of the Moriwaki height ensures that certain results of the Kawaguchi--Silverman conjecture hold in this setting. In the case that $\mathbf{k}$ is a universal domain, we show that the arithmetic degree is invariant under the action of $\mr{Gal}(\mathbf k/K)$, where $K\subset \mathbf k$ is a finite generated subfield. Moreover, we show that very general $\mathbf k$-points of $X$ attain the maximal arithmetic complexity.

\section*{Acknowledgement}
We are very grateful to Junyi Xie, who provided the idea of characterizing arithmetic degrees of transcendental points. We would like to thank Shu Kawaguchi, Atsushi Moriwaki, and Yohsuke Matsuzawa for their valuable communications and advice.
In particular, we are thankful to Binggang Qu, one of whose questions proposed in a seminar motivated us to initiate this program. We would like to thank the referees for helpful comments that corrected mistakes and improved the exposition.

\section{Heights over finitely generated fields}\label{Sec_height}
\subsection{Hermitian line bundles on arithmetic varieties}\label{Sec_height_1}
We say an integral scheme $\scrX$ is an \textit{arithmetic variety} if it is flat, separated, and of finite type over $\Spe\mb{Z}$. We say $\scrX$ is \textit{projective} if the morphism $\scrX\rightarrow \Spe \Z$ is projective. Let $\shfL$ be a line bundle on $\scrX$, and $\lVert\cdot\rVert$ be a metric on $\shfL_{\C}^{\mr{an}}$. The pair $\ovl{\shfL}:=(\shfL,\lVert\cdot\rVert)$ is called a \textit{hermitian line bundle} if $\lVert\cdot\rVert$ is smooth and invariant under complex conjugation.

Let $d$ be the relative dimension of $\scrX\rightarrow\Spe \Z$. Let $\ovl\shfL_1, \ovl \shfL_2, \dots, \ovl\shfL_{d+1}$ be $d+1$ hermitian line bundles on $\mc{X}$.
Then after taking a generic resolution of singularities $f:\mc{X}^{\prime}\rightarrow \mc{X}$, the intersection number is defined as \begin{equation}\label{eq_intersection}
    (\ovl \shfL_1\cdot \ovl \shfL_2\cdots\ovl\shfL_{d+1}):=\widehat c_1(f^*\ovl \shfL_1)\cdot \widehat c_1(f^*\ovl \shfL_2)\cdots\widehat c_1(f^*\ovl \shfL_{d+1})\in\R
\end{equation}
            where the latter is defined in \cite{Soule1990AIT}. This is independent of the choice of resolution $f$ as shown in \cite{zhangposvar}.

Following \cite[Corollary 4.8]{zhangposvar}, we say a hermitian line bundle $\ovl\shfL$ is \textit{ample} if
\begin{enumerate}
    \item[(a)] $\shfL_\Q$ is ample;
    \item[(b)] $\shfL$ is relatively nef and the curvature $c_1(\shfL_\C,\lVert\cdot\rVert)$ of $\ovl\shfL$ is semipositive on $\scrX(\C)$;
    \item[(c)] for any horizontal closed subvariety $\mathcal Y\subset\scrX$, i.e. $\mathcal Y$ is closed, irreducible and $\mathcal Y\rightarrow \Spe\Z$ is surjective, \begin{equation*}\label{ineq_arith_ample}
        (\ovl \shfL|_{\mathcal Y})^{\dim(\mathcal Y)}>0.
    \end{equation*}
\end{enumerate}
Note that if we denote by $\zeta_{\mr{abs}}(\ovl \shfL)$ the \textit{absolute minimum} \begin{equation}\label{eq_abs_min}
    \zeta_{\mathrm{abs}}(\ovl \shfL):=\inf_{\substack{Y\subset X_{\ovl{\Q}}\\ Y\text{ is closed}}}\frac{(\ovl\shfL|_{\Delta_Y})^{\dim Y+1}}{(\dim Y+1)\deg_{\shfL_{\Q}}(Y)},
\end{equation}
where $\Delta_Y$ is the Zariski closure of $Y$ in $\scrX$, then we can replace the condition (c) by $\zeta_{\mr{abs}}(\ovl\shfL)>0$, see \cite{zhangposvar}, and \cite[Theorem 2.10]{Ballay2024Okounkov} for an explicit version.
We say $\ovl \shfL$ is \textit{semipositive} if the condition (b) is satisfied. We say $\ovl \shfL$ is \textit{nef} if $\ovl\shfL$ is semipositive and $\zeta_{\mr{abs}}(\ovl \shfL)\geq 0$.

If there exists a global section $s\in H^0(\scrX,\shfL)$ such that $\lVert s\rVert\leq 1$ on $\scrX(\C)$, then $\ovl \shfL$ is said to be \textit{effective}. We say $\ovl \shfL$ is \textit{strictly effective} if moreover $\lVert s\rVert<1$.
We say $\ovl \shfL$ is \textit{big} if $\shfL_\Q$ is big and $n\ovl \shfL$ is strictly effective for some positive integer $n$.

For hermitian line bundles $\ovl \shfL_1,\cdots\ovl \shfL_k$, we denote $\ovl \shfL_1\cdots\ovl\shfL_k\geq_{\mathrm n} 0$ if for any nef hermitian line bundle $\ov{\mc{H}}_1,\dots,\ov{\mc{H}}_{d+1-k}$, \[\big(\ov{\mc{H}}_1\cdots\ov{\mc{H}}_{d+1-k}\cdot \ovl \shfL_1\cdots\ovl\shfL_k)\geq 0.\]
Here, the subscript "n" stands for "numerical".

The intersection number defined in \eqref{eq_intersection} can be extended to the so-called integrable adelic metrized line bundles, and see \cite{zhang1995small}.
\subsection{Moriwaki heights}\label{subsec_moriwaki}
We begin by recalling the definition and some fundamental properties of the Moriwaki height, as established in \cite{moriwaki2000arithmetic}.

Let $K$ be a finitely generated field over $\Q$ with transcendence degree $e$. Let $B$ be a projective and normal arithmetic variety such that the function field of $B$ is $K$, which implies that $\dim B=e+1$. We fix a collection $(\ovl H_1,\dots, \ovl H_{e})$ of nef hermitian line bundles on $B$. The data $\ovl B=(B;\ovl H_1,\dots, \ovl H_{e})$ is referred as a \textit{polarization} or a \textit{model} of $K$. If $\ovl H=\ovl H_1=\dots=\ovl H_e$, we simplify the notation and write $\ovl B=(B;\ovl H)$. Following \cite{moriwaki2000arithmetic}, we say the polarization is ample (resp. nef, or big) if $\ovl H_i$ are ample (resp. nef, or big) hermitian line bundles. In the following, we assume that every polarization is taken to be big and nef unless otherwise specified. We denote by $B^{(1)}$ the set of schematic points of codimension $1$ in $B$, i.e. $$B^{(1)}:=\{\Gamma\in B\mid \mr{codim}_B(\Gamma)=1\}.$$

Let $X$ be a projective $K$-variety of dimension $d$, and $L$ be a line bundle on $X$. We can take a $B$-model $(\mc{X},\ovl\shfL)$ of $(X,L)$, that is, $\mc{X}$ is a $B$-model of $X$, and $\ovl\shfL=(\shfL,\nm)$ is a hermitian line bundle over $\mc{X}$ such that $\shfL_K\simeq L$. For any $x\in X(\ovl K)$, we denote by $\Delta_x$ the Zariski closure of the image of $x:\Spe(\ovl K)\rightarrow X\rightarrow \mc{X}$ in $\mc{X}$. The Moriwaki height $h^{\ovl B}_{(\mc{X},\ov{\mc{L}})}$ is given by the arithmetic intersection theory:
$$h^{\ovl B}_{(\mc{X},\ov{\mc{L}} )}(x)=\frac{1}{[K(x):K]}(\ovl \shfL|_{\Delta_x}\cdot \pi^*\ovl H_1|_{\Delta_x}\cdots \pi^*\ovl H_e|_{\Delta_x}).$$

If $X=\mathbb{P}^n_K$, we have the \textit{naive height}:
\begin{align*}
    h_{\mathrm{nv}}^{\ovl B}([\lambda_0:\dots:\lambda_n])&=\sum_{\Gamma \in B^{(1)}}\max_i\{-\mathrm{ord}_\Gamma(\lambda_i)\}(\ovl H_1|_\Gamma\cdots \ovl H_e|_\Gamma)\\
    &\kern 5em+\int_{B(\C)}\log(\max\{\lvert\lambda_i\rvert\})c_1(\ovl H_1)\dots c_1(\ovl H_e),
\end{align*}
where $[\lambda_0:\dots:\lambda_n]\in \mathbb{P}^n(K)$. Here if $\Gamma$ is vertical, i.e. $\Gamma$ is contained in a fiber $B_p$ of $B$ over a prime number $p$, then $(\ovl H_1|_\Gamma\cdots\ovl H_e|_\Gamma)$ is simply the usual intersection number on an algebraic variety over the finite field $\mathbb F_p$. The notation $\lvert\lambda_i\rvert$ denotes the function $(x\in B(\C))\mapsto \lvert \lambda_i(x)\rvert$ where $\lvert\cdot\rvert$ is the usual absolute value on $\C$. 

Let $\scrX=\mathbb P^n_B$, $\O_{\mathbb{P}^n_B}(1)$ be the tautological line bundle on $\mathbb P^n_B$, and $\nm_{\infty}$ be the metric on $\O_{\mathbb{P}^n_B}(1)$ given by
\begin{equation}\label{eq_infinity_fubini}
    \lVert X_i\rVert_\infty:=\frac{\lvert X_i\rvert}{\max_j\{\lvert X_j\rvert\}}.
\end{equation} 
Then the naive height $h^{\ovl B}_{\mathrm{nv}}$ coincides with $h^{\ovl B}_{\mathbb P^n_B,(\O_{\mathbb{P}^n_B}(1),\nm_\infty)}$.
In this paper, the Northcott property of Moriwaki heights is essential. We recall the theorem here.
\begin{theo}[cf. \cite{moriwaki2000arithmetic}, Theorem 4.3]
    Let $\ov{\mc{L}}$ be a hermitian line bundle on $\mc{X}$ whose generic fiber $L=\mc{L}_K$ is ample, then for any $C\in\R$ and $\delta\geq 1$,
    the set $$\{x\in X(\ovl K)\mid h^{\ov B}_{(\scrX,\ovl\shfL)}(x)\leq C,[K(x): K]\leq \delta\}$$
is finite. 
\end{theo} 
We are going to prove some results related to the intrinsic of arithmetic degrees. We begin with the definition of comparability of height functions with respect to a field extension.
\begin{defi}\label{defi_comp}
    Let $K'/K$ be an extension of finitely generated fields over $\Q$ such that $\mathrm{tr.deg}(K/\Q)=e$ and $\mr{tr.deg}(K'/\Q)=e'$. Let $\ovl B=(B;\ovl H_1,\dots,\ovl H_e)$ and $\ovl B'=(B';\ovl H_1',\dots,\ovl H'_{e'})$ be models with big and nef polarizations of $K$ and $K'$ respectively. Let $X$ be a projective $K$-variety. Let $L$ be a line bundle on $X$ and $L'$ be a line bundle on $X_{K'}$ (note that $L'$ is not necessarily the extension $L_{K'}$ of $L$). Let $(\scrX,\ovl \shfL)$  (\textit{resp. }$(\mc{X}',\ovl \shfL')$) be a $B$-model(\textit{resp. }$B'$-model) of $(X,L)$(\textit{resp. }$(X_{K'},L')$). We say $h^{\ovl B}_{(\scrX,\ovl \shfL)}$ and $h^{\ovl B'}_{(\scrX',\ovl \shfL')}$ are \textit{comparable} if 
    there exists constants $C_1,C_2>0$ and $D_1,D_2\in\R$ such that for any $x\in X(\ovl K)$, $$C_1h_{(\scrX,\ovl\shfL)}^{\ovl B}(x)-D_2\leq h_{(\scrX',\ovl \shfL')}^{\ovl B'}(x_{K'})\leq C_2 h_{(\scrX,\ovl \shfL)}^{\ovl B}(x)+D_1,$$
    where $x_{K'}:\Spe\ovl K'\rightarrow X_{K'}$ is the point obtained from the base change of $x:\Spe\ovl K\rightarrow X,$ and $C_1,D_1,C_2,D_2$ are independent of the choice of $x$.
\end{defi}
The following lemma is from the definition directly.
\begin{lemm}\label{lemm_comp}
    We keep the notations and assumptions as in Definition \ref{defi_comp}. Let $K''$ be a finitely generated extension of $K'$ and $\mr{tr.deg}(K''/\Q)=e''$. Let $\ovl B''=(B'';\ovl H_1'',\dots,\ovl H_{e''}'')$ be a big and nef polarization of $K''$. Let $L''$ be a line bundel on $X_{K''}$. Let $(\scrX'',\ovl \shfL'')$ be a $B''$-model of $(X_{K''},L'')$. Consider the following three conditions: \begin{enumerate}
        \item[\textnormal{(a)}] $h^{\ovl B}_{(\scrX,\ovl \shfL)}$ and $h^{\ovl B'}_{(\scrX',\ovl \shfL')}$ are comparable,
        \item[\textnormal{(b)}] $h^{\ovl B'}_{(\scrX',\ovl \shfL')}$ and $h^{\ovl B''}_{(\scrX'',\ovl \shfL'')}$ are comparable,
        \item[\textnormal{(c)}] $h^{\ovl B}_{(\scrX,\ovl \shfL)}$ and $h^{\ovl B''}_{(\scrX'',\ovl \shfL'')}$ are comparable.
    \end{enumerate}
    If \textnormal{(a)} and \textnormal{(b)} hold, so is \textnormal{(c)}. Conversely, if \textnormal{(c)} and \textnormal{(b)} hold, so is \textnormal{(a)}.
\end{lemm}

\begin{prop}\label{prop_compar}
Let $\ovl B=(B;\ovl H_1,\dots,\ovl H_e)$ and $\ovl B'=(B';\ovl H_1',\dots,\ovl H'_{e})$ be two models of $K$. Assume that $\ovl H_i,\ovl H_i'$ are big and nef hermitian line bundles.
Let $X$ be a projective $K$-variety, $L$ and $L'$ be ample line bundles on $X$, and $(\scrX,\ovl \shfL)$ (\textit{resp. }$(\mc{X}',\ovl \shfL')$) be a $B$-model(\textit{resp. }$B'$-model) of $(X,L)$(\textit{resp. }$(X,L')$). 
    Then $h_{(\scrX,\ovl\shfL)}^{\ovl B}$ and $h_{(\scrX',\ovl \shfL')}^{\ovl B'}$ are comparable.
\end{prop}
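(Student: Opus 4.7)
The plan is, by taking suitable common models, to reduce to the case where the two sides live on a single pair of nested models, and then to invoke an arithmetic Siu-type comparison for big-and-nef hermitian line bundles to effect the height comparison.

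First, since $B$ and $B'$ are both normal arithmetic varieties with function field $K$, I would take the normalization $\wt B$ of the Zariski closure of the graph of the birational map $B \dra B'$, producing birational morphisms $p: \wt B \to B$ and $p': \wt B \to B'$. The pullbacks $p^* \ovl H_i$ and $(p')^* \ovl H_i'$ remain arithmetically big and nef on $\wt B$, and by the projection formula for arithmetic intersection theory the Moriwaki height with respect to $(B; \ovl H_\bullet)$ coincides with the one with respect to $(\wt B; p^* \ovl H_\bullet)$, and similarly on the $B'$ side. Replacing $\scrX$ and $\scrX'$ by a common $\wt B$-model $\wt \scrX$ that dominates both $\scrX \times_B \wt B$ and $\scrX' \times_{B'} \wt B$ (for instance, the normalization of the closure of the diagonal image of $X$), and pulling back the hermitian line bundles to $\wt{\ovl \shfL}_1, \wt{\ovl \shfL}_2$, again preserves the heights up to a uniformly bounded error coming from the incompatibility between pulled-back metrics, which is controlled since $\wt \scrX(\C)$ is compact. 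After these reductions the task becomes to compare two Moriwaki heights on a single arithmetic scheme $\wt \scrX \to \wt B$, for two different choices of polarization data and two different hermitian line bundles with ample generic fiber.

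The remaining comparison rests on a single positivity input: for any two arithmetically big-and-nef hermitian line bundles $\ovl H, \ovl H'$ on the projective arithmetic variety $\wt B$, there exists an integer $N > 0$ such that both $N\ovl H - \ovl H'$ and $N\ovl H' - \ovl H$ are arithmetically pseudo-effective; an analogous statement holds on $\wt \scrX$ for two smooth hermitian line bundles with ample generic fibers, after twisting by a sufficiently positive vertical divisor pulled back from $\wt B$. Substituting these into the multilinear expression
\[
h^{\ovl B_i}_{(\wt \scrX, \wt{\ovl \shfL}_i)}(x) = \tfrac{1}{[K(x):K]}\,\ardeg\!\bigl(\widehat{c}_1(\wt{\ovl \shfL}_i|_{\Delta_x}) \cdot \widehat{c}_1(\pi^*\ovl H^{(i)}_1|_{\Delta_x}) \cdots \widehat{c}_1(\pi^*\ovl H^{(i)}_e|_{\Delta_x})\bigr),
\]
and using that $\widehat{c}_1(\wt{\ovl \shfL}|_{\Delta_x})$ pairs non-negatively against pseudo-effective classes (thanks to ampleness of the generic fiber), one obtains the desired linear comparison up to a uniform additive error.

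The main obstacle, as I see it, is establishing the required arithmetic positivity comparison with sufficient uniformity — an arithmetic analog of Siu's inequality allowing any two big-and-nef hermitian line bundles on an arithmetic variety to be bounded by each other up to a multiplicative constant — together with verifying that pairing against a horizontally ample $\wt{\ovl \shfL}|_{\Delta_x}$ preserves non-negativity when one of the factors is only pseudo-effective. Once these positivity inputs are secured, the proof reduces to bookkeeping with multilinearity of the arithmetic intersection pairing and the projection formula under the common-model reductions.
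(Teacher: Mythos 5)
Your proposal matches the paper's proof in all essentials: pass to the normalization $B''$ of the graph of the birational map $B \dashrightarrow B'$ to land on a common base, invoke the arithmetic Siu's inequality to bound $\ovl H_i$ and $\ovl H_i'$ against each other after a multiplicative twist, and then use multilinearity of the arithmetic intersection pairing together with ampleness of $L$, $L'$ to compare the two heights. The paper packages the last two steps by citing Moriwaki's comparison lemma (Prop.~3.3.7(5) of \cite{moriwaki2000arithmetic}) rather than spelling out the pseudo-effectivity bookkeeping directly, but that is a difference of exposition, not of method.
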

\begin{proof}
Let $B''$ be the normalization of the graph of the birational map $B\dashrightarrow B'$. Let $g:B''\rightarrow B$ and $g':B''\rightarrow B'$ be the natural projections. By the projection formula and the birational invariance of the arithmetic volume \cite[Theorem 4.2]{moriwaki2009convol}, $g^* \ovl H_i, g'^*\ovl H_i'(i=1,2,\dots, e)$ remain nef and big. 
By the Yuan's inequality \cite[Theorem 5.2.2]{yuan2021adelic}, there exists $C\in\N_+$ such that for each $i=1,2,\dots,e$, the differences $C g'^*\ovl H_i'-g^*\ovl H_i$ and $C g^*\ovl H_i-g'^*\ovl H_i'$ are effective.
Indeed, $$C^{\dim B}\ovl H_i^{\dim B}-\dim B\cdot C^{\dim B-1}g^*\ovl H_i^{\dim B-1}g'^*\ovl H_i'>0$$ for $C\gg0.$
Let $(\scrX_{B''}, \ovl \shfL_{B''})$ and $(\scrX'_{B''}, \ovl \shfL'_{B''})$ denote the base changes of $(\scrX,\ovl \shfL)$ and $(\scrX',\ovl \shfL')$ respectively.
Then by \cite[Proposition 3.3.7(5)]{moriwaki2000arithmetic}, we obtain the following inequalities:
\[h_{(\scrX,\ovl\shfL)}^{\ovl B}=h_{(\scrX_{B''},\ov\shfL_{B''})}^{(B'';g^*\ov H_1,\dots,g^*\ov H_e)}\leq h_{(\scrX_{B''},\ov\shfL_{B''})}^{(B'';Cg'^*\ov H'_1,\dots,Cg'^*\ov H'_e)}+O(1)=C^{e}h_{(\scrX_{B''},\ov\shfL_{B''})}^{(B'';g'^*\ov H'_1,\dots,g'^*\ov H'_e)}+O(1),\]
and similarly,
\[h_{(\scrX_{B''},\ov\shfL_{B''})}^{(B'';g'^*\ov H'_1,\dots,g'^*\ov H'_e)}\leq h_{(\scrX_{B''},\ov\shfL_{B''})}^{(B'';Cg^*\ov H_1,\dots,Cg^*\ov H_e)}+O(1)=C^{e}h_{(\scrX,\ovl\shfL)}^{\ovl B}+O(1).\]

On the other hand, since $L$ and $L'$ are both ample, Lemma \ref{lemm_comp} implies that the heights $h_{(\scrX_{B''},\ov\shfL_{B''})}^{(B'';g'^*\ov H'_1,\dots,g'^*\ov H'_e)}$ and $h_{(\scrX'_{B''},\ov\shfL'_{B''})}^{(B'';g'^*\ov H'_1,\dots,g'^*\ov H'_e)}=h_{(\scrX',\ovl \shfL')}^{\ovl B'}$ are comparable, which concludes the proof.

\end{proof}

The following result is the key ingredient in constructing arithmetic degrees over fields of characteristic zero. As we will see, it ensures that the arithmetic degrees of points in an algebraic dynamical system over any algebraically closed field of characteristic zero are well-defined. (Proposition \ref{prop_arith_deg_intr}). We need to consider the following situation. Let $K$ be a finitely generated field over $\Q$, and $X$ be a projective $K$-variety. We fix a big and nef polarization $\ovl B=(B,\ovl H)$ of $K$. Suppose that we have two field extensions $K_1/K$ and $K_2/K$ where both $K_1$ and $K_2$ are finitely generated fields over $\Q$, then the Moriwaki heights on $X_{K_1}(\ovl K_1)$ and $X_{K_2}(\ovl K_2)$, when restricted to $X(\ovl K)$, should be comparable. To be more precise, let $x\in X(\ovl K)$. Consider the following diagram
\[\begin{tikzcd}
 & \mr{Spec}(\ovl K_1)\arrow{r}{x_{K_1}} \arrow{d} &X_{K_1}\arrow{d}\\
 & \mr{Spec}(\ovl K) \arrow{r}{x} &X\\
 & \mr{Spec}(\ovl K_2)\arrow{r}{x_{K_2}} \arrow{u} &X_{K_1}\arrow{u},
\end{tikzcd}
\]
where $x_{K_i}\in X_{K_i}(\ovl K_i)$ are obtained by base changes for $i=1,2.$ Now we take Moriwaki heights $h_{\scrX_i,\ovl{\mathcal A}_i}^{\ovl B_i}$ on $X_{K_i}(\ovl K_i)$ that is, for each $i=1,2$, $\ovl B_i=(B_i, \ovl H_i)$ is a model of $K_i$, $\scrX_i$ is a $B_i$-model of $X_{K_i}$, $\ovl{\mathcal A_i}=(\mc{A}_i,\nm_i)$ is a hermitian line bundle on $\scrX_i$ such that $(A_i)_{K_i}$ is ample.
For the remainder of this subsection, we keep those notations and assumptions.

\begin{theo}\label{theo_height_comparable}
    There exists constants $C_1,C_2>0$ and $D_1,D_2\in \R$ such that for any $x\in X(\ovl K)$, $$C_1h_{(\scrX_1,\ovl{\mathcal A}_1)}^{\ovl B_1}(x_{K_1})-D_1\leq h_{(\scrX_2,\ovl {\mathcal A}_2)}^{\ovl B_2}(x_{K_2})\leq C_2 h_{(\scrX_1,\ovl {\mathcal A}_1)}^{\ovl B_1}(x_{K_1})+D_2,$$
    where $C_1,D_1,C_2,D_2$ are independent of the choice of $x\in X(\ov{K})$.
\end{theo}

Before the proof, we prepare some useful lemmas.
\begin{lemm}\label{lemm_naive}
    Assume that $B_2=B$, $B_1=\mathbb{P}_\Z^1\times B$, and let $\ovl H_1=p^*(\mathcal{O}(1),\nm_{\mathrm{FS}})\ot f^*\ovl H$, where $\ovl H$ is a big and nef hermitian line bundle over $B$ and $p:B_1\rightarrow \mathbb{P}_\Z^1$, $f:B_1\rightarrow B$ are canonical projections. The Fubini-Study metric $\nm_{\mr{FS}}$ on $\mc{O}(1):=\mc{O}_{\mathbb{P}_{\Z}^1}(1)$ is given by 
    $$\lVert X_i\rVert_{\mr{FS}}:=\frac{\lvert X_i\rvert}{\sqrt{\lvert X_0\rvert^2+\lvert X_1\rvert^2}}\text{ }(i=0,1),$$
    where $|
    \cdot|$ denotes the usual absolute value. Then there exists a constant $C>0$, independent of the choice of $x\in \mathbb P^n(K)$ such that 
    $$(e+1)h^{\ovl B}_{\mathrm{nv}}(x)\leq h_{\mathrm{nv}}^{\ovl B_1}(x_{K_1})\leq C h^{\ovl B}_{\mathrm{nv}}(x)$$
    where $K_1=K(t)$ for an indeterminate $t$ and $e=\mr{tr.deg}(K/\mb{Q})$ is the transcendence degree of $K$ over $\mb{Q}$.
\end{lemm}
\begin{proof}
    Consider $x=[\lambda_0:\dots:\lambda_n]\in \mathbb P^n(K)$ where each $\lambda_i\in K$. Then \begin{equation}\label{eq_naiveheight_comp}
    h^{\ovl B_1}_{\mathrm{nv}}(x_{K_1})=\sum_{\Gamma \in B_1^{(1)}}\max_i\{-\mathrm{ord}_\Gamma(\lambda_i)\}(\ovl H_1|_\Gamma)^{1+e}+\int_{B_1(\C)}\log(\max_i\{\lvert \lambda_i\rvert\})c_1(\ovl H_1)^{1+e}.
\end{equation}

 In $B_1^{(1)}$, any prime divisor can be classified into the following two types:
 \begin{itemize}
     \item Type 1: a divisor of form $f^{-1}\Gamma=\mathbb{P}^1_\Z\times\Gamma$, where $\Gamma\in B^{(1)}$.
     \item Type 2: a divisor of form $\ov{\{y\}}$, that is, the Zariski closure of a closed point $y\in \mb{P}^1_K\hookrightarrow \mb{P}_{\mb{Z}}^1\times B$.
 \end{itemize}  For all $\Gamma$ of type 2, we have $\max\limits_i\{-\mathrm{ord}_{\Gamma}(\lambda_i)\}=0$ since $\lambda_i\in K, i=0,\dots,n$. Therefore, it suffices to consider the prime divisors of type 1 in $B_1$, given by $f^{-1}\Gamma=\mathbb{P}^1_\mathbb{Z}\times\Gamma$, where $\Gamma\in B^{(1)}$. Consequently, we obtain
\begin{equation}\label{eq_part1}
\sum_{\Gamma \in B_1^{(1)}}\max_i\{-\mathrm{ord}_\Gamma(\lambda_i)\}(\ovl H_1|_\Gamma)^{1+e}=\sum_{\Gamma\in B^{(1)}}\max_i\{-\mathrm{ord}_{\Gamma}(\lambda_i)\}(\ovl H_1|_{f^{*}\Gamma})^{1+e}.
\end{equation}
If $\Gamma$ is horizontal in $B$, then
\[(\ovl H_1|_{f^*\Gamma})^{1+e}=(\ovl H_1|_{f^{-1}\Gamma})^{1+e}= \frac{(e+1)e}{2}(\mathcal{O}(1),\lVert\cdot\rVert_{\mr{FS}})^2(H|_{\Gamma_\Q})^{e-1}+(e+1)(\ovl H|_\Gamma)^{e}.\]

Recall that the absolute minimum $\zeta_{\mathrm{abs}}(\ovl H)>0$ (see \eqref{eq_abs_min} for definition) since $\ov H$ is ample \cite[Theorem 2.10]{Ballay2024Okounkov}.
Hence $$\frac{(\ovl H|_\Gamma)^e}{e(H|_{\Gamma_\Q})^{e-1}}\geq \zeta_{\mathrm{abs}}(\ovl H)> 0.$$
We have the inequality \begin{equation}\label{ineq_deg_arith_geom}
    (e+1)(\ovl H|_\Gamma)^{e}\leq (\ovl H_1|_{f^{-1}\Gamma})^{1+e}\leq C\cdot(\ovl H|_\Gamma)^e,\end{equation}
where the constant $C$ is given by 
\[ C=(e+1)+\frac{(e+1)e}{2e\zeta_{\mathrm{abs}}(\ovl H)}(\mathcal{O}(1),\lVert\cdot\rVert_{\mr{FS}})^2=(e+1)\left(1+\frac{1}{4\zeta_{\mathrm{abs}}(\ovl H)}\right).\]
Here, the equality $(\mathcal{O}(1),\lVert\cdot\rVert_{\mr{FS}})^2=\frac{1}{2}$ follows from \cite[Page 667]{Kohler2002flag}.

If $\Gamma$ is vertical, then $(\ovl H_1|_{f^*\Gamma})^{1+e}=(1+e)(\ovl H|_{\Gamma})^{e}$ by the projection formula of algebraic cycles. 

For the integration term in \eqref{eq_naiveheight_comp}, we have
\begin{equation}\label{eq_part2}
\begin{aligned}
    \int_{B_1(\C)}\log(\max_i\{\lvert \lambda_i\rvert\})c_1(\ovl H_1)^{1+e}&=(e+1)\int_{B_1(\C)}\log(\max_i\{\lvert\lambda_i\rvert\})p^*c_1(\ov{\mathcal O(1)})f^*c_1(\ovl H)^{e}\\
    &=(e+1)\int_{B(\C)}\log(\max_i\{\lvert\lambda_i\rvert\})c_1(\ovl H)^{e},
\end{aligned}
\end{equation}
where the second equality follows from Fubini's theorem.
We may assume that $\lambda_0=1$. Then, by \eqref{eq_naiveheight_comp},\eqref{eq_part1},\eqref{ineq_deg_arith_geom} and \eqref{eq_part2}, we obtain
$$(e+1)h^{\ovl B}_{\mathrm{nv}}(x)\leq h_{\mathrm{nv}}^{\ovl B_1}(x_{K_1})\leq C h^{\ovl B}_{\mathrm{nv}}(x).$$
\end{proof}

\begin{lemm}\label{lemm_reduction}
    We keep the same assumption as in Lemma \ref{lemm_naive}. Let $\mc X$ be a $B$-model of $X$, and $\ovl{\mc A}=(\mc A,\lVert\cdot\rVert)$ be a hermitian line bundle on $\mc X$ such that $\mc A_K$ is ample. 
    Then 
    $h_{(\scrX_1,\ovl{\mathcal A}_1)}^{(B_1,\ovl H_1)}$ and $ h_{(\scrX,\ovl {\mc A})}^{(B,\ovl H)}$ are comparable.
\end{lemm}
\begin{proof}
    First, observe that we may assume that $x\in X(K)$. Indeed, since $K_1/K$ is purely transcendental, we have $[K_1(x_{K_1}):K_1]=[K(x):K]$. By combining this with \cite[Proposition 3.3.1]{moriwaki2000arithmetic}, we may assume without loss of generality that $K(x)=K$ by replacing $B$ with its normalization in $K(x)$.
    
    We may fix a closed embedding $X\xrightarrow{\iota} \mathbb{P}^{n}_K$. Let $\scrX$ be the Zariski closure of $X$ in $\mathbb{P}^n_B$ with respect to $\iota$. By abuse of notation, we may still denote by $\iota $ the embedding $\scrX\rightarrow \mathbb{P}^n_B$. Define $\ovl{\shfL}:=\iota^*(O_{\mathbb{P}^n_B}(1),\lVert\cdot\rVert_\infty)$, where $O_{\mathbb{P}^n_B}(1)$ is the tautological bundle over $\mathbb{P}^n_B$ and $\lVert\cdot\rVert_\infty$ is the quotient metric given by \eqref{eq_infinity_fubini}. By \cite[Proposition 3.3.2]{moriwaki2000arithmetic}, we have $h^{\ovl B}_{(\widetilde\scrX,\ovl{\shfL})}=h^{\ovl B}_{\mathrm{nv}}|_{X(\ovl K)}$.
    Therefore, due to Proposition \ref{prop_compar}, we may replace $\ovl{\mc{A}}_2$ by $\ovl\shfL$. Applying the same reasoning to $\iota_{K_1}:X_{K_1}\rightarrow \mathbb{P}^n_{K_1}$, we conclude the proof from Lemma \ref{lemm_naive}.
\end{proof}

\begin{proof}[Proof of Theorem \ref{theo_height_comparable}]
    We take $B'=B_1\times_{\Spe\Z} B_2$ and fix an ample hermitian line bundle $\ovl H'$ over $B'$. We take an ample hermitian line bundle $\ovl{\mc{A}}'$ over $\scrX':=\scrX_1\times_{B_1}B'$. So it suffices to prove that $h_{(\scrX_i,\ovl{\mathcal A}_i)}^{\ovl B_i}$ and $h_{(\scrX', \ovl {\mathcal A}')}^{\ovl B'}$ are comparable for $i=1,2$ by using Lemma \ref{lemm_comp}. In the following, we may assume that $B_2=B$ (which implies that $K_2=K$) and there exists a morphism $f: B_1\rightarrow B$.
    
    Assuming that $\mathrm{tr.deg}(K_1/K)=d$, we can take $t_1,\dots,t_d\in K_1$ algebraically independent over $K$. Then $K(t_1,\dots,t_d)/K$ is purely transcendental and $[K_1:K(t_1,\dots, t_d)]<\infty$. By \cite[Proposition 3.3.1]{moriwaki2000arithmetic}, we may assume that $K_1=K(t_1,\dots,t_d)$. Applying Lemma \ref{lemm_reduction} inductively for $d$-times, we can see that the theorem holds in the case that $B_1=(\mathbb{P}^1_\Z)^d\times B$ and $\ovl H_1=\mathop{\bigotimes}\limits_{i}p_i^*(\mathcal O(1),\nm_{\mathrm{FS}})\ot f^*\ovl H$ where $p_i: B_1\rightarrow \mathbb{P}^1_\Z$ is the $i$-th projection. Then we are done by Proposition \ref{prop_compar}.
\end{proof}

\begin{rema}
    By arithmetic intersection theory, we can define the height of a closed subvariety. If we assume that both $X$ and $B_{\Q}$ are smooth, then the projection formula for arithmetic cycles \cite[Proposition 5.18]{moriwaki2014arakelov} gives a similar comparable result for the heights of closed subvarieties.
\end{rema}
\subsection{Height associated to divisors} 
In this subsection, we fix a model $\ovl B=(B;\ovl H_1,\dots, \ovl H_e)$ of a finitely generated field $K$.
Let $X$ be a projective normal variety. We denote by $\mathrm{Div}_\R(X)$ the group of $\R$-Cartier divisors. For any $D=a_1 D_1+\dots+a_r D_r\in \mathrm{Div}_\R(X)$, where $a_i\in \R$ and $D_i$ are Cartier integral divisors, a Moriwaki height associated to $D$ is of the form
$$a_1 h_{(\scrX_1,\ov\shfL_1)}^{\ovl B}+\dots+a_r h_{(\scrX_r,\ov\shfL_r)}^{\ovl B},$$
where $(\scrX_i,\ov{\shfL}_i)$ is a $B$-model of $(X,\O_X(D_i))$ for each $i=1,\dots, r$. For simplicity, we denote this height by $h^{\ovl B}_{\ovl D}(\cdot)$ or $h_{\ovl D}(\cdot)$ if there is no ambiguity, where $\ovl D$ represents the data $(a_1,\dots, a_r; (\scrX_1,\ov\shfL_1),\dots, (\scrX_r,\ov\shfL_r))$. 
\begin{prop}\label{prop_unbound_big}
    Let $D$ be a $\R$-Cartier big divisor. Then for $\delta\in\Z_+$ and $C\in\R$, the set $\{x\in X(\ovl K)\mid [K(x):K]\leq \delta,h_{\ovl D}(x)\leq C\}$ is not Zariski dense.
\end{prop}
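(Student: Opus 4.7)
The plan is the standard reduction to the ample case via Kodaira's lemma. First, apply Kodaira's lemma for $\mb{R}$-Cartier divisors to write $D \sim_{\mb{R}} A + E$ with $A$ an ample $\mb{R}$-Cartier divisor and $E$ an effective $\mb{R}$-Cartier divisor. Decomposing $E = \sum_i a_i E_i$ with $a_i > 0$ and $E_i$ effective integral Cartier divisors, one would choose for each $E_i$ a $B$-model $(\scrX_i, \ov{\shfL_i})$ of $(X, \O_X(E_i))$ in which the canonical section $s_{E_i}$ of $\O_X(E_i)$ extends to a global section of $\shfL_i$. Then the standard effective-divisor argument gives a lower bound $h_{\ov{E_i}}(x) \geq -c_i$ for every $x \in X(\ov{K}) \setminus \mathrm{Supp}(E_i)$: the non-archimedean contributions to the arithmetic intersection are non-negative off $\mathrm{Supp}(E_i)$ because $s_{E_i}$ extends, and the archimedean contribution is bounded below by a multiple of $-\log\sup_{B(\C)} \|s_{E_i}\|$, which is finite by continuity of the metric on the compact space $B(\C)$.

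Next, using the $\mb{R}$-linearity of the Moriwaki height in the divisor argument built into the definition above, together with Proposition \ref{prop_compar} to reconcile the different model choices on a common refinement, the linear equivalence $D \sim_{\mb{R}} A + E$ upgrades to a uniform identity $h_{\ov D}(x) = h_{\ov A}(x) + h_{\ov E}(x) + O(1)$ on $X(\ov{K})$. Summing the individual lower bounds yields $h_{\ov E}(x) \geq -c_E$ for all $x \in X(\ov{K}) \setminus \mathrm{Supp}(E)$. Thus for any $x$ in the set $\{x \in X(\ov{K}) \mid [K(x):K] \leq \delta,\ h_{\ov D}(x) \leq C\}$ lying outside $\mathrm{Supp}(E)$, one gets $h_{\ov A}(x) \leq C + c_E + O(1)$. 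Writing $A = \sum_j b_j A_j$ with $b_j > 0$ and $A_j$ ample integral Cartier and applying the Northcott property recalled for ample Moriwaki heights, only finitely many such $x$ of bounded degree can exist.

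Consequently, the set in the proposition is contained in $\mathrm{Supp}(E) \cup \{x_1, \dots, x_N\}$, a proper Zariski closed subset of $X$, so it is not Zariski dense. The main obstacle is the $\mb{R}$-Cartier bookkeeping: making sure the comparability statement of Proposition \ref{prop_compar} is strong enough to yield a genuinely uniform $O(1)$ equality $h_{\ov D} = h_{\ov A} + h_{\ov E} + O(1)$ valid on all of $X(\ov{K})$ (and not merely on some dense open subset), and that the effective-section argument producing the lower bound on $h_{\ov E}$ goes through with general continuous hermitian metrics on the normal arithmetic base $B$, where one must carefully split the arithmetic intersection into its finite and archimedean parts and check non-negativity of the former outside $\mathrm{Supp}(E)$.
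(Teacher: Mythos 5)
Your proposal is correct and takes essentially the same approach as the paper's proof: write $D = A + E$ with $A$ ample and $E$ effective, bound $h_{\ov E}$ from below off $\mathrm{Supp}(E)$ using the effective-section argument, and conclude via the Northcott property for $h_{\ov A}$. The paper simply chooses the height functions so that $h_{\ov D} = h_{\ov A} + h_{\ov E}$ holds exactly (no $O(1)$ reconciliation needed), but your more cautious treatment of the $\R$-Cartier decomposition and the model-comparison step via Proposition~\ref{prop_compar} is entirely consistent with it.
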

\begin{proof}
    We can write $D=A+E$ where $A$ is ample and $E$ is effective. We may choose height functions $h_{\ovl A}(\cdot)$ and $h_{\ovl E}(\cdot)$ such that $h_{\ovl D}=h_{\ovl A}+h_{\ovl E}.$
    There exists a constant $C_0$ such that $h_{\ovl D}(x)\geq h_{\ovl A}(x)-C_0$ for all $(x\in X\setminus \mathrm{Supp}(E))(\ovl K)$. Therefore we have
    $$\begin{aligned}
        &\{x\in X(\ovl K)\mid [K(x):K]\leq \delta,h_{\ovl D}(x)\leq C\}\\
        &\subset \{x\in X(\ovl K)\mid [K(x):K]\leq \delta,h_{\ovl A}(x)\leq C+C_0\}\cup \mathrm{Supp}(E)(\ovl K),
    \end{aligned}$$
    which is not Zariski dense by the Northcott property \cite[Theorem A]{moriwaki2000arithmetic}.
\end{proof}
\begin{prop}\label{prop_pullback}
    Let $f: Y\rightarrow X$ be a morphism of projective normal $K$-varieties, which extends to a $B$-morphism $F:\mc{Y}\rightarrow \scrX$ where $\mc{Y}$ and $\scrX$ are $B$-models of $Y$ and $X$ respectively. Let $D=a_1 D_1+\dots+a_r D_r$ be a $\R$-Cartier divisor where $a_i\in \R$ and $D_i$ are Cartier integral divisors. We fix a Moriwaki height function $h_{\ovl D}=a_1h^{\ovl B}_{(\scrX,\ovl\shfL_1)}+\dots+a_r h^{\ovl B}_{(\scrX,\ovl\shfL_r)}$ where $(\scrX,\ovl\shfL_i)$ is a $B$-model of $(X,\O_X(D_i))$ for each $i$. Then $h_{\ovl D}\circ f$ is a Moriwaki height function associated with $f^*D$.
\end{prop}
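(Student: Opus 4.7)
The strategy is to reduce the statement, by $\R$-linearity in $D$, to the case of a single integral Cartier divisor, and then to prove the pointwise pullback formula
\[
h^{\ovl B}_{(\mc{Y},F^*\ovl\shfL_i)}(y)\;=\;h^{\ovl B}_{(\scrX,\ovl\shfL_i)}(f(y))\qquad \forall\,y\in Y(\ovl K),
\]
for each index $i$. Once this is established, multiplying by $a_i$ and summing gives
\[
h_{\ovl D}\circ f\;=\;\sum_{i=1}^r a_i\,h^{\ovl B}_{(\mc{Y},F^*\ovl\shfL_i)},
\]
which is, by definition, a Moriwaki height with respect to the decomposition $f^*D=\sum_i a_i f^*D_i$, since $(\mc{Y},F^*\ovl\shfL_i)$ is a valid $B$-model of $(Y,\O_Y(f^*D_i))$. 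The fact that $\mc{Y}\to B$ is flat in codimension one along $B^{(1)}$ is part of the hypothesis, and $F^*\ovl\shfL_i$ is a continuous hermitian line bundle whose generic fibre is $f^*\O_X(D_i)$, so the model condition is verified.

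The core step is the pullback formula, which I would prove using the projection formula in arithmetic intersection theory. Fix $y\in Y(\ovl K)$ and let $\Delta_y\subset\mc{Y}$, $\Delta_{f(y)}\subset\scrX$ be the closures of the images of $y$ and $f(y)$ respectively. Since $F$ is a $B$-morphism, $F$ restricts to a surjective morphism $F|_{\Delta_y}:\Delta_y\to \Delta_{f(y)}$ which is generically finite of degree $[K(y):K(f(y))]$. Applying the projection formula to the $(e+1)$-fold intersection defining the arithmetic degree, together with the compatibility $F^*\pi_X^*\ovl H_j=\pi_Y^*\ovl H_j|_{\mc{Y}}$ (where $\pi_X,\pi_Y$ are the structure morphisms to $B$), one obtains
\[
\ardeg\!\Bigl(\widehat c_1(F^*\ovl\shfL_i|_{\Delta_y})\prod_{j=1}^{e}\widehat c_1(\pi_Y^*\ovl H_j|_{\Delta_y})\Bigr)
=[K(y):K(f(y))]\cdot\ardeg\!\Bigl(\widehat c_1(\ovl\shfL_i|_{\Delta_{f(y)}})\prod_{j=1}^{e}\widehat c_1(\pi_X^*\ovl H_j|_{\Delta_{f(y)}})\Bigr).
\]
Dividing both sides by $[K(y):K]=[K(y):K(f(y))]\cdot[K(f(y)):K]$ yields the desired identity.

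The main obstacle is purely bookkeeping: one must make sure that the projection formula is applicable in the arithmetic setting even though $\Delta_y$ and $\Delta_{f(y)}$ need not be regular, and one must track the degree factor correctly against the normalization $1/[K(\,\cdot\,):K]$ built into the definition of the Moriwaki height. Both are standard: the projection formula for the arithmetic intersection pairing extends to integrable hermitian line bundles on possibly singular arithmetic varieties (as recalled in Notation 3 of the paper, referring to \cite{yuan2021adelic}), and the degree cancellation is exactly what the normalization is designed to produce. No further hypotheses on $f$, $D$ or on the regularity of the models beyond those already assumed are needed.
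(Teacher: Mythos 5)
Your proof is correct and takes essentially the same route as the paper's one-line proof, which simply invokes the projection formula to assert $h_{\ovl D}\circ f=\sum_i a_i h^{\ovl B}_{(\mc{Y},F^*\ovl\shfL_i)}$; you have merely expanded the degree bookkeeping (the factor $[K(y):K(f(y))]$ cancelling against the normalization $1/[K(y):K]$) and the verification that $(\mc{Y},F^*\ovl\shfL_i)$ is a valid $B$-model of $(Y,\O_Y(f^*D_i))$, both of which the paper leaves implicit. One small remark: the paper's displayed formula writes $(\scrX,F^*\ovl\shfL_i)$, which is surely a typo for $(\mc{Y},F^*\ovl\shfL_i)$ as in your version, since $F^*\ovl\shfL_i$ lives on $\mc{Y}$.
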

\begin{proof}
    By the projection formula \cite[Proposition 5.18]{moriwaki2014arakelov}, we have $h_{\ovl D}\circ f=a_1h^{\ovl B}_{(\scrX,F^*\ovl\shfL_1)}+\dots+a_r h^{\ovl B}_{(\scrX,F^*\ovl\shfL_r)}.$
\end{proof}
\begin{prop}\label{prop_num_triv}
    Let $D$ be a numerically trivial $\R$-Cartier divisor, and $A$ be an ample $\R$-Cartier divisor. We fix height functions $h_{\ovl D}(\cdot)$ and $h_{\ovl A}(\cdot)$ associated to $D$ and $A$ respectively such that $h_{\ovl A}(x)\geq 1, \forall x\in X(\ov{K})$. Then there exists a constant $C$ such that 
    $$h_{\ovl D}(x)\leq C\sqrt{h_{\ovl A}(x)}$$
    for any $x\in X(\ovl K).$
\end{prop}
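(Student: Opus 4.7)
The plan is to reduce to N\'eron--Tate canonical height theory on the Albanese variety of $X$: there, a divisor class in $\mathrm{Pic}^0$ gives rise to a canonical height that is $\R$-linear, a symmetric ample class gives a positive-definite quadratic canonical height, and Cauchy--Schwarz produces the square-root comparison.

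First I would perform reductions. By $\R$-linearity of the height in the divisor class, and by the finiteness of the torsion subgroup of $NS(X)$, it suffices to prove the bound for a single integral Cartier divisor $D$ whose class lies in $\mathrm{Pic}^0(X)$. After replacing $X$ by a smooth projective birational model (which changes $h_{\ov D}$ and $h_{\ov A}$ only by $O(1)$) and passing to a finite extension of $K$ if necessary (legitimate by Proposition \ref{prop_compar} and Theorem \ref{theo_height_comparable}), I fix an Albanese morphism $a\colon X\to G:=\mathrm{Alb}(X)$. Because $a^*\colon\mathrm{Pic}^0(G)\xrightarrow{\sim}\mathrm{Pic}^0(X)$ is an isomorphism, write $D=a^*E$ with $E\in\mathrm{Pic}^0(G)$; Proposition \ref{prop_pullback} then yields $h_{\ov D}(x)=h_{\ov E}(a(x))+O(1)$.

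On $G$ I would fix a symmetric ample divisor $M$ and invoke Tate's telescoping construction, which is purely formal: using $[n]^*M\equiv n^2M$ and $[n]^*E\equiv nE$, the limits $\hat h_M(y):=\lim_{n\to\infty}h_{\ov M}([2^n]y)/4^n$ and $\hat h_E(y):=\lim_{n\to\infty}h_{\ov E}([2^n]y)/2^n$ converge and differ from $h_{\ov M}$, $h_{\ov E}$ by bounded functions on $G(\ov K)$. The first is a quadratic form, positive-definite modulo torsion by the Northcott property of Moriwaki heights; the second is an $\R$-linear form which, using the polarization $\phi_M$ attached to $M$, can be written as $\hat h_E(y)=\langle\phi_M^{-1}(E),y\rangle$ in the N\'eron--Tate bilinear form associated with $\hat h_M$. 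Cauchy--Schwarz then gives $|\hat h_E(y)|\leq c_0\sqrt{\hat h_M(y)}$, and after absorbing the bounded differences one obtains $|h_{\ov E}(y)|\leq c_1\sqrt{h_{\ov M}(y)+1}$ on $G(\ov K)$.

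To conclude, note that $a^*M$ is nef on $X$ and $A$ is ample, so Kleiman's criterion furnishes $c>0$ with $cA-a^*M$ ample; consequently $h_{\ov{a^*M}}(x)\leq c\,h_{\ov A}(x)+O(1)$, while Proposition \ref{prop_pullback} gives $h_{\ov M}(a(x))=h_{\ov{a^*M}}(x)+O(1)$. Combining with the preceding displays and the hypothesis $h_{\ov A}\geq 1$ yields $h_{\ov D}(x)\leq C\sqrt{h_{\ov A}(x)}$, as required. The main point to verify is that Tate's canonical-height machinery, classically developed for Weil heights over number fields, transfers verbatim to Moriwaki heights; this is automatic once one has $\R$-linearity of heights in the divisor, the functoriality of Proposition \ref{prop_pullback}, and the Northcott property, so the main difficulty is bookkeeping rather than any substantive new input.
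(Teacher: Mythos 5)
Your plan — reduce to $D\in\mathrm{Pic}^0$ using finiteness of $\mathrm{NS}$-torsion, pull $D$ back along the Albanese morphism, run Tate's telescoping to produce a linear canonical height paired against a positive-semidefinite quadratic one, and finish with Cauchy--Schwarz — is the classical N\'eron--Lang argument, which is precisely what the paper invokes through its citation of Matsuzawa's Proposition~B.3 (itself resting on the Hindry--Silverman treatment). So the skeleton of your argument is the same as the paper's intended proof.

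However, the step ``replace $X$ by a smooth projective birational model'' breaks the chain as written. After resolving $\pi:\tilde X\to X$, the pullback $\pi^*A$ is big and nef but never ample, and for any ample $\tilde A$ on $\tilde X$ the class $c\pi^*A-\tilde A$ has strictly negative degree on every $\pi$-exceptional curve; hence no inequality $h_{\tilde A}\le c\,h_{\pi^*A}+O(1)$ holds on all of $\tilde X(\ov K)$. This leaves your closing Kleiman comparison ($cA-a^*M$ ample, hence $h_{a^*M}\le c\,h_A+O(1)$) with nothing ample to compare against on the resolved model. The clean fix is simply to skip the resolution: over a field of characteristic $0$, $X$ normal projective already has $\mathrm{Pic}^0_{X/K}$ an abelian variety, the Albanese morphism $a:X\to\mathrm{Alb}(X)$ is defined, and $a^*$ identifies $\mathrm{Pic}^0(\mathrm{Alb}(X))$ with $\mathrm{Pic}^0(X)$, so the whole argument runs directly on $X$ with $A$ ample in hand. (Alternatively, keep the resolution, note that $c\pi^*A-a^*M$ is big for $c\gg0$ and so $\mathbb{Q}$-effective away from a proper closed subset, and treat that subset by Noetherian induction on dimension.) Two smaller imprecisions: the telescope requires the identity $[n]^*E=nE$ in $\mathrm{Pic}^0(G)$, not merely a numerical equivalence (which is vacuous for classes in $\mathrm{Pic}^0$); and the ``automatic'' transfer of Tate's machinery to Moriwaki heights should at least record that these heights are additive in the divisor and bounded below on ample classes --- facts from \cite{moriwaki2000arithmetic} that are the genuine inputs to the limit.
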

\begin{proof}
    We follow the same strategy as in \cite[Proposition B.3]{matsuzawa2017bounds}.
    We may assume that $D$ and $A$ are Cartier divisors. Since $mD$ is algebraically equivalent to $0$ for sufficiently large $m$, we may further assume that $D$ is algebraically trivial. Since $K$ is of characteristic $0$, after replacing $K$ with a finite extension, we have a morphism $\mr{Alb}:X\rightarrow \mr{Alb}(X)$ where $\mr{Alb}(X)$ is an Albanese variety of $X$, such that $D=\mr{Alb}^* E$ for a divisor $E$ on $\mr{Alb}(X)$ which is algebraically trivial by \cite[Theorem 9.5.4]{FGAexplained}. Then the rest of the proof follows \cite[Theorem B.5.9]{Hindry2000diophantine}.
\end{proof}

\section{Dynamics over finitely generated fields}\label{Sec_arithdyndeg}
In this section, let $X$ be a normal $d$-dimensional projective variety over a finitely generated field $K$ over $\mb{Q}$ and $f: X\dashrightarrow X$ be a dominant rational self-map. The indeterminacy locus $I(f)$ is defined as the complement of the largest open subset where $f$ is well-defined. Denote that
\[X_f(\ov{K}):=\{x\in X(\ov{K})\mid x\notin I(f^n), \forall n\geq 0\}.\]
\subsection{Arithmetic dynamical degrees} Let $(B,\ov{H})$ be a model of $K$ such that $\ov{H}$ is an ample hermitian line bundle on $B$. 
Let $A$ be an ample line bundle on $X$, and consider a $B$-model $(\mc{X},\ov{\mc{A}})$ of $(X,A)$. Furthermore, to guarantee that the arithmetic dynamical degree is positive, we must assume $\ov{\mc{A}}_1=\ov{\mc{A}}(-t)=(\mc{A},e^{t}\|\cdot\|)$ is a nef hermitian line bundle for some $t$, where $\|\cdot\|$ denotes the metric associated with $\ov{\mc{A}}$.

The dominant rational self-map $f: X\dra X$ can be lifted to a dominant rational self-map $F:\mc{X}\dra \mc{X}$. We take a resolution of indeterminacy 
\[
\begin{tikzcd}
 & \Gamma_F \arrow{dl}{\pi_1} \arrow{dr}{\pi_2}\\
\mc{X} \arrow[dashed]{rr}{F} && \mc{X} ,
\end{tikzcd}
\]
where $\Gamma_F$ is the normalization of the graph of $F$ in $\mc{X}\times_B\mc{X}$, $\pi_1$ is a birational morphism to the first coordinate, and $\pi_2$ is a generically finite morphism to the second coordinate. By a slight abuse of language, we will also use $\pi_1$ and $\pi_2$ to refer to the morphisms from $\Gamma_{F,\mathbb{Q}}$ to $\mathcal{X}_{\mathbb{Q}}$ induced by the projections. The arithmetic dynamical degree can be defined as follows.

\begin{defi}\label{def_ar_dyn_deg}
    For an integer $1\leq k\leq n$, we define
$$\ardeg_{k,\ov{\mc{A}}}(F):=\big(\pi_1^*\ovl{\mathcal A}^{d-k+1}\cdot\pi_2^*{\ovl {\mathcal A}}^{k}\cdot\pi_1^*\pi^*\ovl H^{e}\big),$$
where $e$ is the transcendence degree of the extension $K/\mb{Q}$ and $\pi:\mc{X}\ra B$ denotes the canonical map.

\end{defi}

Due to the choice of hermitian line bundle $\ov{\mc{A}}$, the degree $\ardeg_{k,\ov{\mc{A}}}(F)$ is positive. In fact, since $\ov{\mc{A}}=\ov{\mc{A}}_1(t)$ for some $t>0$, we have
\[
    \begin{aligned}\allowdisplaybreaks
\ardeg_{k,\ov{\mc{A}}}(F)=&\big(\pi_1^*\ovl{\mathcal A}^{d-k+1}\cdot\pi_2^*{\ovl {\mathcal A}}^{k}\cdot\pi_1^*\pi^*\ovl H^{e}\big)\\    
=&\big(\pi_1^*\ovl{\mathcal A}_1^{d-k+1}\cdot\pi_2^*{\ovl {\mathcal A}}_1^{k}\cdot\pi_1^*\pi^*\ovl H^{e}\big)+(d-k+1)t\big(\pi_1^* \mc{A}_{\mb{Q}}^{d-k}\cdot \pi_2^*\mc{A}_{\mb{Q}}^k\big)\big(H_{\mb{Q}}^e\big)\\
&\kern 14em+kt\big(\pi_1^* \mc{A}_{\mb{Q}}^{d-k+1}\cdot \pi_2^*\mc{A}_{\mb{Q}}^{k-1}\big)\big(H_{\mb{Q}}^e\big)>0.
\end{aligned}
\]

The degree function satisfies the submultiplicity formula below, which follows from the high codimensional Yuan's inequality as established in \cite[Theorem 2.5]{song2023high}.

\begin{theo}\label{theo_deg_submulti}
With all the notation introduced at the beginning of this section, there exists a constant $C>0$, depending only on $\mc{X}$ and $\ov{\mc{A}}$, such that for any dominant rational self-maps $F:\mc{X}\dra \mc{X}$ and $G:\mc{X}\dra \mc{X}$, we have
\[\ardeg_{k,\ov{\mc{A}}}(F\circ G)\leq C\cdot\ardeg_{k,\ov{\mc{A}}}(F)\cdot\ardeg_{k,\ov{\mc{A}}}(G).\]
\end{theo}
\begin{proof}
Consider the diagram. 
\begin{equation}\label{bigdiagram}
 \begin{tikzcd}
		&                                                   & \Gamma \arrow{ld}{u} \arrow{rd}{v}   &                                                   &                    \\
		& \Gamma_G \arrow{ld}{\pi_1} \arrow{rd}{\pi_2} &                                           & \Gamma_F \arrow{ld}{\pi_3} \arrow{rd}{\pi_4} &                    \\
		\mc{X} \arrow{rrd}[swap]{\pi} \arrow[dashed]{rr}[swap]{G} &                                                   & \mc{X} \arrow{d}{\pi} \arrow[dashed]{rr}[swap]{F} &                                                   & \mc{X} \arrow{lld}{\pi} \\
		&                                                   & B                              &                                                   &                   
	\end{tikzcd}   
\end{equation}
Here, $\Gamma$ is the normalization of the graph of $\pi_3^{-1}\circ\pi_2$ in $\Gamma_G\times_B \Gamma_F$, while $u$ and $v$ denote the morphisms to the first and second coordinates, respectively.

 Denote $\pi\circ \pi_1\circ u$ by $\wt{\pi}$. The intersection number
 \[\begin{aligned}
      \big(u^*\pi_2^*\ov{\mc{A}}^{d+1}\cdot \wt{\pi}^*\ov{H}^e\big)&=\mr{topdeg}(\pi_2)\cdot\big(\ov{\mc{A}}^{d+1}\cdot \pi^*\ov{H}^e\big)\\
      &=\mr{topdeg}(\pi_2)\left(\big(\ov{\mc{A}}_1^{d+1}\cdot \pi^*\ov{H}^e\big)+(d+1)t(A^{d+1}\cdot \pi^*H_{\mb{Q}}^e)\right)>0,
 \end{aligned}
\]
 where $A=\mc{A}_{\mb{Q}}$, and $\mr{topdeg}(\cdot)$ is the topological degree.
 Applying \cite[Theorem 2.5]{song2023high} to $v^*\pi_4^*\ov{\mc{A}}^k\wt{\pi}^*\ov{H}^e$ and $u^*\pi_2^*\ov{\mc{A}}^k \wt{\pi}^*\ov{H}^e$, it follows that
	\[v^*\pi_4^*\ov{\mc{A}}^k \wt{\pi}^*\ov{H}^e\leq_{\mathrm n} (d+2-k)^k\frac{\big(u^*\pi_2^*\ov{\mc{A}}^{d+1-k}\cdot v^*\pi_4^*\ov{\mc{A}}^k\cdot \wt{\pi}^*\ov{H}^e\big)}{\big(u^*\pi_2^*\ov{\mc{A}}^{d+1}\cdot \wt{\pi}^*\ov{H}^e\big)}\cdot u^*\pi_2^*\ov{\mc{A}}^k\wt{\pi}^*\ov{H}^e.\]
	Intersecting both sides of the inequality above with $u^*\pi_1^*\ov{\mc{A}}^{d+1-k}$, one gets
	\[\ardeg_{k,\ov{\mc{A}}}(F\circ G)\leq (d+2-k)^k\frac{\big(u^*\pi_2^*\ov{\mc{A}}^{d+1-k}\cdot v^*\pi_4^*\ov{\mc{A}}^k\cdot \wt{\pi}^*\ov{H}^e\big)}{\big(u^*\pi_2^*\ov{\mc{A}}^{d+1}\cdot \wt{\pi}^*\ov{H}^e\big)} \ardeg_{k,\ov{\mc{A}}}(G).\]
	Note that $\pi_2\circ u=\pi_3\circ v$. We have
 \[\begin{aligned}
     \frac{\big(u^*\pi_2^*\ov{\mc{A}}^{d+1-k}\cdot v^*\pi_4^*\ov{\mc{A}}^k\cdot \wt{\pi}^*\ov{H}^e\big)}{\big(u^*\pi_2^*\ov{\mc{A}}^{d+1}\cdot \wt{\pi}^*\ov{H}^e\big)}&= \frac{\big(v^*\pi_3^*\ov{\mc{A}}^{d+1-k}\cdot v^*\pi_4^*\ov{\mc{A}}^k\cdot v^*\pi_3^*\pi^*\ov{H}^e\big)}{\big(v^*\pi_3^*\ov{\mc{A}}^{d+1}\cdot  v^*\pi_3^*\pi^*\ov{H}^e\big)}\\
     &=\frac{\mr{topdeg}(v)\cdot\big(\pi_3^*\ov{\mc{A}}^{d+1-k}\cdot \pi_4^*\ov{\mc{A}}^k\cdot \pi_3^*\pi^*\ov{H}^e\big)}{\mr{topdeg}(v)\cdot\big(\pi_3^*\ov{\mc{A}}^{d+1}\cdot \pi_3^*\pi^*\ov{H}^e\big)}\\
     &=\frac{\ardeg_{k,\ov{\mc{A}}}(F)}{\big(\ov{\mc{A}}^{d+1}\cdot \pi^*\ov{H}^e\big)}.
 \end{aligned}\]
	Thus
	\[\ardeg_{k,\ov{\mc{A}}}(F\circ G)\leq C
	\cdot \ardeg_{k,\ov{\mc{A}}}(F)\cdot\ardeg_{k,\ov{\mc{A}}}(G),\]
	where $C=\frac{(d+2-k)^k}{\big(\ov{\mc{A}}^{d+1}\cdot \pi^*\ov{H}^e\big)}>0$ is a constant independent of $F$ and $G$.
\end{proof}

\begin{defiprop}\label{coro_dyn_arith_deg}
For a dominant rational self-map $f:X\dra X$, the \emph{$k$-th arithmetic dynamical degree}
	\[\widehat{\lambda}_k(f):=\lim\limits_{n\ra \infty}\ardeg_{k,\ov{\mc{A}}}(F^n)^{1/n}.\]
 exists. In other words, the limit defining $\widehat{\lambda}_k(f)$ converges.
\end{defiprop}
\begin{proof}
The following Fekete's lemma is well-known.

\textbf{Fekete's Lemma:} Let $\{a_n\}_{n \geq 0}$ be a submultiplicative sequence of positive real numbers, meaning that for all $m, n \geq 0$,
$$
a_{n+m} \leq a_n a_m.
$$
Then the limit $\lim\limits_{n \rightarrow \infty} a_n^{\frac{1}{n}}$ exists.

We set $a_n:=C\cdot\ardeg_{k,\ov{\mc{A}}}(F^n)$, where $C$ is the positive constant from Theorem \ref{theo_deg_submulti}. This makes $\{a_n\}_{n \geq 0}$ a submultiplicative sequence of positive real numbers. The result then follows directly from Fekete's lemma.
\end{proof}
\begin{rema}
\begin{enumerate}
    \item When $K$ is a number field, we have $e=0$, then the arithmetic dynamical degree $\widehat{\lambda}_k(f)$ coincides with the arithmetic degree $\alpha_k(f)$ defined in \cite{DGH2022high} and \cite{song2023high}.
    \item  While the degree $\ardeg_{k,\ov{\mc{A}}}(F)$ a priori depends on the model triple $(\mc{X},F,\ov{\mc{A}})$,
  the arithmetic dynamical degree $\widehat \lambda_k(f)$ is independent of these choices. We will see this fact later in Theorem \ref{rel_deg_fgf}.
\end{enumerate}  
\end{rema}

In the remainder of this subsection, we establish the relative degree formula. Our strategy is to control the growth rate of $\ardeg_{k,\ov{\mc{A}}}(F^n)$ using a generalized form of the Yuan's inequality. The key ingredient comes from the following lemma, which is an extension of \cite[Lemma 3.1]{song2023high}. The proof of this lemma will be placed at the end of this subsection.

\begin{lemm}\label{lem_Siu}
Let $\mathcal{X}\ra \Spe\mb{Z}$ be an arithmetic variety with relative dimension $d$, and $r, k$ be integers with $r\geq 0, k>0, r+k\leq d$.  Let $\ov{\mc{L}},\ov{\mc{M}},\ov{\mc{N}}$ be nef hermitian line bundles with generic fibers $L=\mc{L}_{\mb{Q}}, M=\mc{M}_{\mb{Q}}$ and $N=\mc{N}_{\mb{Q}}$. Suppose $(L^k\cdot M^{d-r-k}\cdot N^r)>0$ and $(M^{d-r}\cdot N^r)>0$. Then there exists a positive constant $t_0>0$ only depends on $d, k, \ov{\mc{L}},\ov{\mc{M}}$ and $\ov{\mc{N}}$,  such that for any arithmetic variety $\mc{Y}\ra \Spe \mb{Z}$ with $\dim\mc{Y}=\dim\mc{X}$ and any dominant generically finite morphism $g:\mc{Y}\ra \mc{X}$, we have
	\[ g^*\ov{\mc{L}}^k\cdot g^*\ov{\mc{N}}^r\leq_{\mathrm n}
	(d+2-r-k)^k\frac{\big(L^k\cdot M^{d-r-k}\cdot N^r\big)}{\big(M^{d-r}\cdot N^r\big)}\cdot g^*\ov{\mc{M}}(t_0)^{k}\cdot g^*\ov{\mc{N}}^r.\]
Here, $\ov{\mc{M}}(t_0)=(\mc{M},e^{-t_0}\|\cdot\|)$ denotes the hermitian line bundle $\ov{\mc{M}}$ twisted by $t_0$.
\end{lemm}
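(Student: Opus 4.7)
The plan is to invoke the arithmetic higher codimensional Siu inequality from \cite[Theorem 2.5]{song2023high} once and for all on the fixed arithmetic variety $\mc{X}$, and then pull the resulting arithmetic cycle inequality back along $g$. The crucial observation is that, since the hypotheses $(L^k\cdot M^{d-r-k}\cdot N^r)>0$ and $(M^{d-r}\cdot N^r)>0$ and the line bundles $\ov{\mc{L}},\ov{\mc{M}},\ov{\mc{N}}$ all live on $\mc{X}$, the constant $t_0$ produced by that theorem depends only on this fixed data and not on the cover $g:\mc{Y}\to\mc{X}$; this is exactly the uniformity required by the statement.

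In more detail, I would first apply \cite[Theorem 2.5]{song2023high} directly to the triple $(\ov{\mc{L}},\ov{\mc{M}},\ov{\mc{N}})$ on $\mc{X}$. The positivity of the two stipulated intersection numbers guarantees that the coefficient
\[C_{0}\ =\ (d+2-r-k)^{k}\,\frac{(L^{k}\cdot M^{d-r-k}\cdot N^{r})}{(M^{d-r}\cdot N^{r})}\]
is a well-defined positive real number, and the theorem produces a $t_0>0$, depending only on $d, k, \ov{\mc{L}}, \ov{\mc{M}}, \ov{\mc{N}}$, such that
\[\ov{\mc{L}}^{k}\cdot\ov{\mc{N}}^{r}\ \leq\ C_{0}\,\ov{\mc{M}}(t_{0})^{k}\cdot\ov{\mc{N}}^{r}\]
as an inequality of arithmetic cycles of codimension $r+k$ on $\mc{X}$, in the sense of Notation~6.

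Next, I would transport this cycle inequality from $\mc{X}$ to $\mc{Y}$ by pulling back along $g$. To verify that the pulled-back inequality remains valid on $\mc{Y}$ in the sense of Notation~6, I would pair both sides with an arbitrary tuple $\ov{\mc{Q}}_{1},\ldots,\ov{\mc{Q}}_{d+1-r-k}$ of nef hermitian line bundles on $\mc{Y}$ and use the arithmetic projection formula to rewrite the pairing on $\mc{Y}$ as an intersection number on $\mc{X}$ of the original cycle difference against the pushforward class $g_{*}\bigl(\widehat c_{1}(\ov{\mc{Q}}_{1})\cdots\widehat c_{1}(\ov{\mc{Q}}_{d+1-r-k})\bigr)$. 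This pushforward is itself a positive arithmetic cycle on $\mc{X}$: on the finite part, the pushforward of an effective intersection of nef classes by a generically finite morphism is effective, and on the archimedean side, the pushforward of a closed positive current by a proper holomorphic map is again a closed positive current. Pairing the nonnegative arithmetic cycle produced in step one against this positive class yields a nonnegative number, which is exactly what is required.

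The step I anticipate as the main obstacle is the archimedean positivity assertion: justifying that $g_{*}$ sends the intersection of currents associated to nef hermitian metrics on $\mc{Y}$ to a positive current on $\mc{X}$, in a way fully compatible with the arithmetic intersection theory being used here, requires some care with the regularity of the metrics involved. An equivalent and arguably cleaner route is to work throughout with integrable hermitian line bundles in the sense of \cite{yuan2021adelic}, so that one can apply bilinearity of the arithmetic intersection pairing together with the fact that $g^{*}$ preserves arithmetic nefness, reducing the cycle inequality on $\mc{Y}$ to numerical inequalities that follow immediately from the $\mc{X}$-statement obtained in step one.
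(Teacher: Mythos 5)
Your plan has a genuine gap at the pullback step, and it is not the one you flag. The paper's Notation~6 declares $\ov{\mc{Z}}\geq 0$ on a fixed arithmetic variety $\mc{X}$ to mean that $\bigl(\ov{\mc{H}}_1\cdots\ov{\mc{H}}_{d+1-k}\cdot\ov{\mc{Z}}\bigr)\geq 0$ for all nef hermitian line bundles $\ov{\mc{H}}_i$ \emph{on $\mc{X}$}; this is a statement dual only to the cone generated by complete intersections of nef classes on $\mc{X}$. When you pair $g^*\ov{\mc{Z}}$ with nef $\ov{\mc{Q}}_1,\ldots,\ov{\mc{Q}}_{d+1-r-k}$ on $\mc{Y}$ and apply the projection formula, you are reduced to pairing $\ov{\mc{Z}}$ with $g_*\bigl(\widehat c_1(\ov{\mc{Q}}_1)\cdots\widehat c_1(\ov{\mc{Q}}_{d+1-r-k})\bigr)$ on $\mc{X}$. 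For the argument to close you would need this pushforward to lie in the closed cone generated by products of nef hermitian line bundles on $\mc{X}$, but in codimension $\geq 2$ that is not true in general: pushforward along a generically finite morphism preserves effectivity and pseudoeffectivity of cycles, but it does not preserve membership in the complete-intersection cone, which is the only cone against which the $\mc{X}$-inequality from Siu's theorem is known to hold. Calling the pushforward "a positive arithmetic cycle" does not identify which cone it lies in, and the notion of positivity needed here is precisely the one you cannot certify. Passing to integrable adelic line bundles does not help; the obstruction concerns positivity cones in higher codimension, not metric regularity.

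The paper avoids the issue entirely by applying the arithmetic Siu inequality \cite[Theorem 2.5]{song2023high} \emph{on $\mc{Y}$}, to the already-pulled-back bundles $g^*\ov{\mc{L}}$, $g^*\ov{\mc{M}}(t)$, $g^*\ov{\mc{N}}$, so the resulting cycle inequality lives on $\mc{Y}$ directly with no pullback of a cycle inequality required. The coefficient produced is a ratio of arithmetic intersection numbers of pullbacks; since the top degree of $g$ cancels in numerator and denominator by the projection formula, that ratio equals a ratio of intersection numbers on $\mc{X}$ alone. One then picks $t_0$ large (depending only on $d,k,\ov{\mc{L}},\ov{\mc{M}},\ov{\mc{N}}$) so that after expanding in $t$ this ratio is dominated by the geometric ratio $\big(L^k\cdot M^{d-r-k}\cdot N^r\big)/\big(M^{d-r}\cdot N^r\big)$; because the choice is made purely on $\mc{X}$, the same $t_0$ works uniformly over all $g$. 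To salvage your route you would need a Siu-type statement valid against the pushforward cone, which Notation~6 does not supply.
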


\begin{theo}\label{rel_deg_fgf}
With the notations introduced at the beginning of this section, for a dominant rational self-map $f:X \dra X$, the $k$-th arithmetic dynamical degree can be expressed by
\[\widehat{\lambda}_k(f)=\max\{\lambda_k(f),\lambda_{k-1}(f)\},\]
where the dynamical degrees $\lambda_k(f)$ and $\lambda_{k-1}(f)$ are defined in the introduction.
\end{theo}

\begin{proof}
In the proof, we make use of the notation for the relative dynamical degree. Define $$\reldeg_{k,A}(F_{\mb{Q}},\mc{X}_{\mb{Q}}/B_{\mb{Q}}):=\big(\pi_2^*A^k\cdot \pi_1^*A^{d-k}\cdot\pi_1^*\pi^*H_{\mb{Q}}^e\big),$$
where $\pi: \mc{X}_{\mb{Q}}\ra B_{\mb{Q}}$ is the natural map between generic fibers.
The $k$-th \emph{relative dynamical degree} is then given by
\[\lambda_k(F_{\mb{Q}},\mc{X}_{\mb{Q}}/B_{\mb{Q}}):=\lim_{n\ra\infty}\reldeg_{k,A}(F_{\mb{Q}}^n,\mc{X}_{\mb{Q}}/B_{\mb{Q}})^\frac{1}{n},\]
where the existence of the limit is proved in \cite[Theorem 6.0.2]{dang_degrees_2020}.

We begin by proving that $\widehat{\lambda}_k(f)\geq \max\{\lambda_k(f),\lambda_{k-1}(f)\}$. Fix a real number $t>0$.  Applying \cite[Corollary 2.4]{song2023high} to $\pi_1^*\ov{\mO}_{\mc{X}}(t)$ and $\pi_1^*\ov{\mc{A}}$, we obtain
\[\pi_1^*\ov{\mO}_{\mc{X}}(t)\leq_{\mathrm n} (d+e+1)\frac{\big(\pi_1^*\ov{\mO}_{\mc{X}}(t)\cdot \pi_1^*\ov{\mc{A}}^{d+e}\big)}{\big(\pi_1^*\ov{\mc{A}}^{d+e+1}\big)}\cdot\pi_1^*\ov{\mc{A}}=(d+e+1)\frac{t\cdot\big(A^{d+e}\big)}{\big(\ov{\mc{A}}^{d+e+1}\big)}\cdot\pi_1^*\ov{\mc{A}}.\]
This inequality is, in fact, an equivalent form of Yuan’s inequality \cite[Theorem 2.2]{Yuan_2008}, which is explained and generalized in \cite[Corollary 2.4]{song2023high}.

We intersect both sides of the inequality with $\pi_2^*\ov{\mc{A}}^k\cdot \pi_1^*\ov{\mc{A}}^{d-k}\cdot \pi_1^*\pi^*\ov{H}^e$, it follows that
\[
	\begin{aligned}
		\ardeg_{k,\ov{\mc{A}}}(F)&=\big(\pi_2^*\ov{\mc{A}}^k\cdot \pi_1^*\ov{\mc{A}}^{d+1-k}\cdot \pi_1^*\pi^*\ov{H}^e\big)\\
		&\geq \big(\pi_2^*\ov{\mc{A}}^k\cdot \pi_1^*\ov{\mc{A}}^{d-k}\cdot \pi_1^*\pi^*\ov{H}^e\cdot \pi_1^*\ov{\mO}_{\mc{X}}(t)\big)\cdot\frac{ \big(\ov{\mc{A}}^{d+e+1}\big)}{(d+e+1)\cdot t\cdot\big(A^{d+e}\big)}\\
		&=C\cdot\big(\pi_2^*A^k\cdot \pi_1^*A^{d-k}\cdot\pi_1^*\pi^*H_{\mb{Q}}^e\big)\\
		&=C\cdot \reldeg_{k,A}(F_{\mb{Q}},\mc{X}_{\mb{Q}}/B_{\mb{Q}}),
	\end{aligned}
\]
where the constant $C:=\frac{ \big(\ov{\mc{A}}^{d+e+1}\big)}{(d+e+1)(A^{d+e})}$.

Similarly, we have
\[\pi_2^*\ov{\mO}_{\mc{X}}(t)\leq_{\mathrm n} (d+e+1)\frac{\big(\pi_2^*\ov{\mO}_{\mc{X}}(t)\cdot \pi_2^*\ov{\mc{A}}^{d+e}\big)}{\big(\pi_2^*\ov{\mc{A}}^{d+e+1}\big)}\cdot\pi_2^*\ov{\mc{A}}=(d+e+1)\frac{t\cdot\big(A^{d+e}\big)}{\big(\ov{\mc{A}}^{d+e+1}\big)}\cdot\pi_2^*\ov{\mc{A}}\]
by Yuan's inequality. Intersecting both sides of this inequality with $\pi_2^*\ov{\mc{A}}^{k-1}\cdot \pi_1^*\ov{\mc{A}}^{d+1-k}\cdot\pi_1^*\pi^*\ov{H}^e$, we obtain
\[
	\begin{aligned}
		\ardeg_{k,\ov{\mc{A}}}(F)&=\big(\pi_2^*\ov{\mc{A}}^k\cdot \pi_1^*\ov{\mc{A}}^{d+1-k}\cdot \pi_1^*\pi^*\ov{H}^e\big)\\
		&\geq \big(\pi_2^*\ov{\mc{A}}^{k-1}\cdot \pi_1^*\ov{\mc{A}}^{d+1-k}\cdot \pi_1^*\pi^*\ov{H}^e\cdot \pi_2^*\ov{\mO}_{\mc{X}}(t)\big)\cdot\frac{ \big(\ov{\mc{A}}^{d+e+1}\big)}{(d+e+1)\cdot t\cdot\big(A^{d+e}\big)}\\
		&=C\cdot\big(\pi_2^*A^{k-1}\cdot \pi_1^*A^{d+1-k}\cdot \pi_1^*\pi^*H_{\mb{Q}}^e\big)\\
		&=C\cdot \reldeg_{k-1,A}(F_{\mb{Q}},\mc{X}_{\mb{Q}}/B_{\mb{Q}}).
	\end{aligned}
\]
The two inequalities imply $$\widehat{\lambda}_k(f)\geq \max\{\lambda_k(F_{\mb{Q}},\mc{X}_{\mb{Q}}/B_{\mb{Q}}),\lambda_{k-1}(F_{\mb{Q}},\mc{X}_{\mb{Q}}/B_{\mb{Q}})\}.$$

Since $X$ is the generic fiber of $\mc{X}_{\mb{Q}}\ra B_{\mb{Q}}$, we have
\[\lambda_k(f)=\lambda_k(F_{\mb{Q}},\mc{X}_{\mb{Q}}/B_{\mb{Q}}), \text{ and } \lambda_{k-1}(f)=\lambda_{k-1}(F_{\mb{Q}},\mc{X}_{\mb{Q}}/B_{\mb{Q}}).\]
Thus we get \[\widehat{\lambda}_k(f)\geq \max\{\lambda_k(f),\lambda_{k-1}(f)\}.\]

Now we prove the converse inequality. We use the diagram \eqref{bigdiagram} again. According to Lemma \ref{lem_Siu} for the case of $r=0$, there exists a constant $t_0>0$ such that
\[ v^*\pi_4^*\ov{\mc{A}}^k\cdot \wt{\pi}^*\ov{H}^e\leq_{\mathrm n}
(d+2-k)^k\frac{\big(\pi_4^*A^k\cdot \pi_3^*A^{d-k}\cdot \pi_3^*\pi^*H_{\mb{Q}}^e\big)}{\big(A^{d}\cdot \pi^*H_{\mb{Q}}^e\big)}\cdot v^*\pi_3^*\ov{\mc{A}}(t_0)^{k}\cdot\wt{\pi}^*\ov{H}^e.\]
By intersecting the inequality above with $u^*\pi_1^*\ov{\mc{A}}^{d+1-k}$, it follows that
\begin{equation}\label{ineq_reldeg}
\begin{aligned}
	&\ardeg_{k,\ov{\mc{A}}}(F\circ G)\\
 =& \big(u^*\pi_1^*\ov{\mc{A}}^{d+1-k}\cdot v^*\pi_4^*\ov{\mc{A}}^k\cdot \wt{\pi}^*\ov{H}^e\big)\\
	\leq &(d+2-k)^k\frac{\big(\pi_4^*A^k\cdot \pi_3^*A^{d-k}\cdot \pi_3^*\pi^*H_{\mb{Q}}^e\big)}{\big(A^{d}\cdot \pi^*H_{\mb{Q}}^e\big)}\cdot \big(u^*\pi_1^*\ov{\mc{A}}^{d+1-k}\cdot v^*\pi_3^*\ov{\mc{A}}(t_0)^{k}\cdot \wt{\pi}^*\ov{H}^e\big)\\
	=&C_1\reldeg_{k,A}(F_{\mb{Q}},\mc{X}_{\mb{Q}}/B_{\mb{Q}}) \big(\big(u^*\pi_1^*\ov{\mc{A}}^{d+1-k}\cdot v^*\pi_3^*\ov{\mc{A}}^k\cdot \wt{\pi}^*\ov{H}^e\big)\\
    &\kern 12em +kt_0(u^*\pi_1^*A^{d+1-k}\cdot v^*\pi_3^*A^{k-1}\cdot \wt{\pi}^*H^e)\big)\\
	=&C_1\reldeg_{k,A}(F_{\mb{Q}},\mc{X}_{\mb{Q}}/B_{\mb{Q}}) \big(\big(u^*\pi_1^*\ov{\mc{A}}^{d+1-k}\cdot u^*\pi_2^*\ov{\mc{A}}^k\cdot \wt{\pi}^*\ov{H}^e\big)\\&\kern 12em+kt_0(u^*\pi_1^*A^{d+1-k}\cdot u^*\pi_2^*A^{k-1}\cdot \wt{\pi}^*H^e)\big)\\
	=& C_2\deg_{k,A}(f)  \big(\ardeg_{k,\ov{\mc{A}}}(G)+kt_0(H_{\mb{Q}}^e)\deg_{k,A}(g) \big),
\end{aligned}
\end{equation}
where $C_2=C_1(H_{\mb{Q}}^e)=(H_{\mb{Q}}^e)\cdot\frac{(d+2-k)^k}{(A^{d}\cdot \pi^*H_{\mb{Q}}^e)}$ is a constant independent of the choice of $F$ and $G$. 

According to the submultiplicity formula of dynamical degrees in \cite[Theorem 1]{dang_degrees_2020}, we have
\[\deg_{k-1,A}(f\circ g)\leq C_3 \deg_{k-1,A}(f)\cdot \deg_{k-1,A}(g),\]
where the constant $C_3$ is independent of the choice of $f$ and $g$.

We construct the matrices
\begin{equation}
U(F):=\begin{bmatrix} \widehat{\deg}_{k,\ov{\mc{A}}}(F)  \\ \deg_{k-1,A}(f)  \end{bmatrix},\quad M(f):=\begin{bmatrix} C_2\deg_{k,A}(f) & C_2k t_0 (H_{\mb{Q}}^e)\deg_{k,A}(f) \\ 0 & C_3\deg_{k-1,A}(f)\end{bmatrix}.
\end{equation}
For clarity, we adopt the convention that for any vectors $v=(v_1,v_2)^{\top}, w=(w_1,w_2)^{\top}\in \mb{R}^2$, the relation $v\leq w$ means $v_1\leq w_1$ and $v_2\leq w_2$.

Next, by setting $G=F^{n-1}$ in \eqref{ineq_reldeg}, we obtain
\[U(F^{n})\leq M(f)\cdot U(F^{n-1}).\]
Iterating this inequality then yields
\[U(F^{n})\leq M(f)^{n-1}\cdot U(F).\]
Consider the norm on $\mb{R}^2$ defined by
\[\|(v_1,v_2)^{\top}\|_{\infty}:=\max\{|v_1|,|v_2|\},\]
and for $M=(a_{ij})_{1\leq i,j\leq 2}\in M_2(\mb{R})$, define 
\[\|M\|_{\mr{max}}:=\max\{|a_{ij}|,1\leq i,j\leq 2\}.\]
With this notation, we observe that
\[\widehat{\deg}_{k,\ov{\mc{A}}}(F^n)\leq\|U(F^n)\|_{\infty}\leq \| M(f)^{n-1}\cdot U(F)\|_{\infty}\leq 2\|M(f)^{n-1}\|_{\mr{max}}\|U(F)\|_{\infty}.\]
Replacing $F$ by $F^r$ in the preceding inequality and taking the $nr$-th root yields
 \[\widehat{\deg}_{k,\ov{\mc{A}}}(F^{nr})^{1/nr}\leq 2^{1/nr}\cdot\|U(F^r)\|_{\infty}^{1/nr}\|M(f^r)^{n-1}\|_{\mr{max}}^{1/nr}.\]
Letting $n\ra \infty$ and noting that the eigenvalues of $M(f^r)$ are $C_2\deg_{k,A}(f^r)$ and $C_3\deg_{k-1,A}(f^r)$, an application of the spectral radius formula shows that
\[\lim_{n\ra\infty}\widehat{\deg}_{k,\ov{\mc{A}}}(F^{nr})^{1/nr}=\widehat{\lambda}_k(f)\leq \max\{C_2\deg_{k,A}(f^r),C_3\deg_{k-1,A}(f^r)\}^{1/r}.\]
Finally, taking the limit as $r\ra\infty$ yields the desired inequality
\[\widehat{\lambda}_k(f)\leq \max\{\lambda_k(f),\lambda_{k-1}(f)\}.\]
Hence we have the assertion.
\end{proof}

The following corollary is an immediate consequence of Theorem \ref{rel_deg_fgf}.

\begin{coro}
With all notations in Definition \ref{def_ar_dyn_deg}, the arithmetic dynamical degree $\widehat{\lambda}_k(f)$ is independent of the choice of models $\ov{B}=(B;\ov{H}), \mc{X}$ or the ample hermitian line bundle $\ov{\mc{A}}$ on $\mc{X}$.
\end{coro}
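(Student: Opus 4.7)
The plan is to derive this corollary as an essentially immediate consequence of Theorem \ref{rel_deg_fgf}. The key point I would emphasize is that Theorem \ref{rel_deg_fgf} was established for an arbitrary admissible triple $(\ov{B},\mc{X},\ov{\mc{A}})$ satisfying the positivity hypotheses imposed at the start of Section \ref{Sec_arithdyndeg}, without singling out any particular such choice. Consequently, for every admissible triple, one has the identity
\[\widehat{\lambda}_k(f) = \max\{\lambda_k(f),\lambda_{k-1}(f)\},\]
so the \emph{a priori} choice-dependent quantity on the left is always equal to the quantity on the right.

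Next, I would observe that the right-hand side depends only on the pair $(X,f)$ regarded as a dominant rational self-map of the $K$-variety $X$: the geometric dynamical degrees $\lambda_k(f)$ are intrinsic invariants of $f:X\dra X$, independent of the auxiliary ample divisor used to define them via (\ref{eq_dyn_deg}), by the works \cite{DS2005Une, Tru2020Rela, dang_degrees_2020} cited in the introduction. In particular, neither $\lambda_k(f)$ nor $\lambda_{k-1}(f)$ makes reference to any of the arithmetic data $\ov{B}$, $\mc{X}$, or $\ov{\mc{A}}$. Combining the two observations, any two choices of admissible data yield the same value of $\widehat{\lambda}_k(f)$, namely $\max\{\lambda_k(f),\lambda_{k-1}(f)\}$, which is exactly the content of the corollary.

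I do not foresee any real obstacle here. The only mildly subtle point is to check that the positivity hypothesis $\ov{\mc{A}} = \ov{\mc{A}}_1(t)$ with $\ov{\mc{A}}_1$ nef is preserved when one passes from one ample $\ov{\mc{A}}$ to another; but since ample hermitian line bundles are in particular nef, taking $t=0$ shows this condition is automatic, so the comparison between any two ample choices genuinely factors through the single intrinsic quantity $\max\{\lambda_k(f),\lambda_{k-1}(f)\}$. Thus the entire proposal reduces to citing Theorem \ref{rel_deg_fgf} and invoking the known intrinsicness of the geometric dynamical degrees.
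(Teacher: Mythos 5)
Your proposal is correct and is exactly the (implicit) argument the paper intends: the corollary is stated immediately after Theorem~\ref{rel_deg_fgf} with no separate proof, precisely because the identity $\widehat{\lambda}_k(f)=\max\{\lambda_k(f),\lambda_{k-1}(f)\}$ holds for any admissible choice of $(\ov{B},\mc{X},\ov{\mc{A}})$ and the right-hand side is intrinsic to $(X,f)$. Your side remark that arithmetic ampleness gives nefness (so the $\ov{\mc{A}}=\ov{\mc{A}}_1(t)$ requirement is automatic with $t=0$) is a reasonable sanity check, though the paper simply takes the positivity hypothesis as a standing assumption on all admissible data.
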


\textit{Proof of Lemma \ref{lem_Siu} } By applying \cite[Theorem 2.5]{song2023high} to $g^*\ov{\mc{L}}^k\cdot g^*\ov{\mc{N}}^r$ and $g^*\ov{\mc{M}}(t)^{k}\cdot g^*\ov{\mc{N}}^r$, we have
		\begin{align*}\allowdisplaybreaks
			&\kern 1em g^*\ov{\mc{L}}^k \cdot g^*\ov{\mc{N}}^r\\
            &\leq_{\mathrm n}(d+2-r-k)^k\frac{\big(g^*\ov{\mc{L}}^k\cdot g^*\ov{\mc{M}}(t)^{d+1-r-k}\cdot g^*\ov{\mc{N}}^r\big)}{\big(g^*\ov{\mc{M}}(t)^{d+1-r}\cdot g^*\ov{\mc{N}}^r\big)}\cdot g^*\ov{\mc{M}}(t)^{k}\cdot g^*\ov{\mc{N}}^r\\
			&=
			(d+2-r-k)^k\frac{\big(\ov{\mc{L}}^k\cdot \ov{\mc{M}}(t)^{d+1-r-k}\cdot \ov{\mc{N}}^r\big)}{\big(\ov{\mc{M}}(t)^{d+1-r}\cdot \ov{\mc{N}}^r\big)}\cdot g^*\ov{\mc{M}}(t)^{k}\cdot g^*\ov{\mc{N}}^r\\
			&=(d+2-r-k)^k\frac{\big(\ov{\mc{L}}^k\cdot \ov{\mc{M}}^{d+1-r-k}\cdot \ov{\mc{N}}^r\big)+(d+1-r-k)t(L^k\cdot M^{d-r-k}\cdot N^r)}{\big(\ov{\mc{M}}^{d+1-r}\cdot \ov{\mc{N}}^r\big)+(d+1-r)t(M^{d-r}\cdot N^r)}\cdot g^*\ov{\mc{M}}(t)^{k}\cdot g^*\ov{\mc{N}}^r.
		\end{align*}
We take $t_0\in \mb{R}$ sufficiently large such that
	\[
	    \frac{\big(\ov{\mc{L}}^k\cdot \ov{\mc{M}}^{d+1-r-k}\cdot \ov{\mc{N}}^r\big)+(d+1-r-k)t_0(L^k\cdot M^{d-r-k}\cdot N^r)}{\big(\ov{\mc{M}}^{d+1-r}\cdot \ov{\mc{N}}^r\big)+(d+1-r)t_0(M^{d-r}\cdot N^r)}
     <\frac{\big(L^k\cdot M^{d-r-k}\cdot N^r\big)}{\big(M^{d-r}\cdot N^r\big)}.
\]
	Thus we obtain
	\[\begin{aligned}
	&\frac{\big(\ov{\mc{L}}^k\cdot \ov{\mc{M}}^{d+1-r-k}\cdot \ov{\mc{N}}^r\big)+(d+1-r-k)t(L^k\cdot M^{d-r-k}\cdot N^r)}{\big(\ov{\mc{M}}^{d+1-r}\cdot \ov{\mc{N}}^r\big)+(d+1-r)t(M^{d-r}\cdot N^r)}\cdot g^*\ov{\mc{M}}(t_0)^{k}\cdot g^*\ov{\mc{N}}^r \\
     \leq_{\mathrm n} &\frac{\big(L^k\cdot M^{d-r-k}\cdot N^r\big)}{\big(M^{d-r}\cdot N^r\big)}\cdot g^*\ov{\mc{M}}(t_0)^{k}\cdot g^*\ov{\mc{N}}^r.
	\end{aligned}
\]
	This completes the proof.\qed

\subsection{Arithmetic degrees and the Kawaguchi--Silverman conjecture}
As in the number field case, one is able to define arithmetic degrees for $x\in X(\ov{K})$, which measures the arithmetic complexity of the $f$-orbit of this point. 

\begin{defi}\label{arith_deg_fgf}
Let $f: X\dra X$ be a dominant rational self-map of $X$. Fix a Moriwaki height $h^{\ov{B}}_{(\mc{X},\ov{\mc{A}})}$ such that $\ov{\mc{A}}$ is an ample hermitian line bundle on $\mc{X}$. Let $h^+_A=\max\left\{h^{\ov{B}}_{(\mc{X},\ov{\mc{A}})},1\right\}$. Then the \emph{arithmetic degree} is defined as
\[\alpha(f,x)=\lim_{n\ra\infty}h^+_{(\mc{X},\ov{\mc{A}})}(f^n(x))^{1/n}\]
for all $x\in X_f(\ov{K})$ if the limit exists.
In addition, we denote $\ov{\alpha}(f,x)=\limsup\limits_{n\ra\infty}h^+_{(\mc{X},\ov{\mc{A}})}(f^n(x))^{1/n}$ and $\underline{\alpha}(f,x)=\liminf\limits_{n\ra\infty}h^+_{(\mc{X},\ov{\mc{A}})}(f^n(x))^{1/n}$.
\end{defi}

\begin{rema}\label{rema_ar_deg}
\begin{enumerate}
     \item According to Theorem \ref{theo_height_comparable}, $\ov{\alpha}(f,x)$ and $ \underline{\alpha}(f,x)$ are independent of the choice of models $\ov{B}=(B;\ov{H})$ and $(\mc{X},\ov{\mc{A}})$.
     
    \item When $K$ is a number field, the arithmetic degree $\alpha(f, x)$ coincides with the classical arithmetic degree. For further details, see \cite{matsuzawa2023recent}. However, if $K$ is a function field over $\mb{Q}$ of one variable, our arithmetic degree is different from the arithmetic degree defined by the geometric height function, see Example \ref{exam_func}.
\end{enumerate}
\end{rema}

We can establish the fundamental inequality between arithmetic degrees and dynamical degrees.
\begin{theo}\label{upper_bound_fgf}
Let $K$ be a finitely generated field over $\mb{Q}$ and $X\ra \Spe K$ be a normal projective variety. For a dominant rational self-map $f:X\dra X$, we have
\[\ov{\alpha}(f,x)\leq \lambda_1(f)\]
for all $x\in X_f(\ov{K})$.
\end{theo}
\begin{proof}If $x\in X_f(\ov{K})$ is not defined over $K$, we consider the base change $X_{L}=X\times_{\Spe K}\Spe L$, where $L=K(x)$. Then the rational map $f:X\dra X$ lifts to a rational map $f_L: X_L \dra X_L$ and  $x_L$ is a rational point of $X_L$ defined over $L$. By \cite[Proposition 3.3.1]{moriwaki2000arithmetic}, it follows that $\ov{\alpha}(f,x)=\ov{\alpha}(f_L,x_L)$. Thus, by replacing $(X,f,x)$ with $(X_L,f_L,x_L)$, we may assume that $[K(x): K]=1$. 

Take a big and nef polarization $\ov{B}=(B,\ov{H})$ and a $B$-model $(\mc{X},\ov{\mc{A}})$ of $(X,A)$, where $A$ is an ample line bundle of $X$. Suppose $\dim X=d$ and $\mr{tr.deg}(K/\mb{Q})=e$, then $\mc{X}$ and $B$ are arithmetic varieties of dimension $d+e+1$ and $e+1$ respectively. According to Theorem \ref{theo_height_comparable}, the arithmetic degree $\ov{\alpha}(f, x)$ is independent of the choice of the ample hermitian line bundle $\ov{\mc{A}}$. Then we can choose $\ov{\mc{A}}$ arbitrarily. 

Denote by $\Delta_x$ the Zariski closure of the image of $x: \Spe(\ov{K})\ra X\ra \mc{X}$ in $\mc{X}$. Then $\Delta_x$ is a horizontal subvariety of dimension $e+1$ of the arithmetic variety $\mc{X}\ra \Spe\mb{Z}$. Denote by $V\subset \mc{X}_{\mb{Q}}$ the generic fiber of $\Delta_x$, which is also the Zariski closure of the image of $x:\Spe(\ov{K})\ra X\ra \mc{X}_{\mb{Q}}$ in $\mc{X}_{\mb{Q}}$. Let $I$ be the ideal sheaf of $V$. Then we replace $\ov{\mc{A}}$ by $\ov{\mc{A}}^{\otimes r}$ for $r$ sufficient large such that $\mc{A}_{\mb{Q}}\otimes I$ is globally generated.

We take global sections $s_1,s_2,\dots,s_{d}\in H^0(\mc{X}_{\mb{Q}},\mc{A}_{\mb{Q}}\otimes I)\subset H^0(\mc{X}_{\mb{Q}},\mc{A}_{\mb{Q}})$ such that $\dim(\di(s_1)\cap \dots \cap \di(s_{d}))=e$.
After replacing $s_i$ by $m\cdot s_i$ for some integer $m$ sufficiently large, we may assume that all sections $s_1,\dots,s_{d}$ lift to global sections $\wt{s}_1,\dots,\wt{s}_{d}\in H^0(\mc{X},\mc{A}).$
The intersection cycle $\di(\wt{s}_1)\cap \dots \cap \di(\wt{s}_{d})$ then contains $\Delta_x$ as an irreducible component. Furthermore, by replacing $\ov{\mc{A}}$ with $\ov{\mc{A}}(t)$ for a sufficiently large $t$, we can ensure that $\wt{s}_1,\dots,\wt{s}_{d}$ are small sections of $\ov{\mc{A}}$. 

Since $\widehat{\lambda}_1(f)=\lim\limits_{n\ra\infty}\widehat{\deg}_{1,\ov{\mc{A}}}(F^n)^{1/n}$, for any $\varepsilon>0$, there exists a constant $C_{\varepsilon}>0$, depending only on $\varepsilon$, such that
\[\widehat{\deg}_{1,\ov{\mc{A}}}(F^n)\leq C_{\varepsilon}(\widehat{\lambda}_1(f)+\varepsilon)^n.\]
Now we take a resolution of indeterminacy of $F^n$
 \[
\begin{tikzcd}
 & \Gamma_{F^n} \arrow{dl}[swap]{\pi_1} \arrow{dr}{\pi_2}\\
\mc{X} \arrow[dashed]{rr}{F^n} && \mc{X} .
\end{tikzcd}
\]
Note that $\pi_1^{\#}\Delta_x$ is an irreducible component of $\di(\pi_1^*\wt{s}_1)\cap \dots \cap \di(\pi_1^*\wt{s}_{d})$, where $\pi_1^{\#}\Delta_x$ is the strict transform of $\Delta_x$ in $\Gamma_{F^n}$. According to \cite[Lemma 4.5]{song2023high}, it follows that
\begin{equation}\label{ineq_fundamental}
    \begin{aligned}
     \big(\ov{\mc{A}}\cdot \pi^*\ov{H}^e\cdot \Delta_{f^n(x)}\big)&=\big(\pi_2^*\ov{\mc{A}}\cdot\pi_2^*\pi^*\ov{H}^e\cdot \pi_1^{\#}\Delta_x\big)\\
     &\leq \big(\pi_2^*\ov{\mc{A}}\cdot\pi_2^*\pi^*\ov{H}^e\cdot \pi_1^*\ov{\mc{A}}^d\big)\\
     &=\widehat{\deg}_{1}(F^n)\\
     &\leq C_{\varepsilon}(\widehat{\lambda}_1(f)+\varepsilon)^n.
\end{aligned}
\end{equation}
Thus 
\[\begin{aligned}
    \ov{\alpha}(f,x)&=\limsup\limits_{n\ra\infty}h^{\ov{B}}_{(\mc{X},\ov{\mc{A}})}(f^n(x))\\
    &=\limsup\limits_{n\ra\infty}\big(\ov{\mc{A}}\cdot \pi^*\ov{H}^e\cdot \Delta_{f^n(x)}\big)^{1/n}\\
    &\leq \limsup_{n\ra\infty} C_{\varepsilon}^{1/n}(\widehat{\lambda}_1(f)+\varepsilon)\\
    &= \widehat{\lambda}_1(f)+\varepsilon.
\end{aligned}
\]

Let $\varepsilon\ra 0$ and applying Theorem \ref{rel_deg_fgf}, we obtain
\[\ov{\alpha}(f,x)\leq \widehat{\lambda}_1(f)=\max\{\lambda_1(f),1\}=\lambda_1(f).\]
\end{proof}

Motivated by the Kawaguchi–-Silverman conjecture over number fields, we propose the following conjecture.

\begin{conj}[The Kawaguchi--Silverman conjecture over a finitely generated field $K$ over $\mb{Q}$]
Let $X$ be a normal projective variety over a finitely generated field $K$ and $f: X\dra X$ be a dominant rational self-map. Then
\begin{itemize}
    \item the limit $\alpha(f,x)$ in Definition \ref{arith_deg_fgf} exists for every $x\in X_f(\ov{K})$;
    \item for $x\in X_f(\ov{K})$, if the $f$-orbit of $x$ is Zariski dense in $X$, then $\alpha(f,x)=\lambda_1(f)$.
\end{itemize}
\end{conj}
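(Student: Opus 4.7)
The plan is to reduce the conjecture over a finitely generated field $K$ to the conjecture over number fields by a spread-out and specialization argument, using the Moriwaki height as the bridge between the two settings. Fix a normal arithmetic model $B$ of $K$ with a polarization $\ov H$, a $B$-model $(\mc{X}, \ov{\mc{A}})$ of $(X, A)$ for some ample line bundle $A$, extend $f$ to $F: \mc{X} \dra \mc{X}$, and (after replacing $K$ by $K(x)$, which is harmless thanks to the $[K(x):K]$ normalization) extend $x$ to a section $\sigma: B^{\circ} \to \mc{X}$ over some dense open $B^{\circ} \subset B$. Existence of $\alpha(f,x)$ and equality with $\lambda_1(f)$ will be handled together: if the lower bound $\underline{\alpha}(f,x) \geq \lambda_1(f)$ can be secured in the Zariski dense case, then combined with Theorem \ref{upper_bound_fgf} both conclusions follow simultaneously. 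The non-dense case is handled by Noetherian induction on $\dim X$: replace $X$ by $Y := \ov{\mc{O}_f(x)}$ and $f$ by the induced rational self-map of $Y$ (passing to an iterate preserving $Y$ if needed), inside which the orbit of $x$ is Zariski dense.

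For the lower bound in the dense case, specialize the base. Choose a codimension-$e$ closed integral subscheme $Z \hookrightarrow B$ mapping dominantly to $\Spe\Z$, so that the generic point of $Z$ has residue field $L_Z$ a number field; this is an arithmetic curve inside $B$ specializing $(X, f, x)$ to $(X_Z, f_Z, x_Z)$ over $L_Z$. For a generic $Z$ we need three facts. (i) $\lambda_1(f_Z) = \lambda_1(f)$, which follows from the invariance of dynamical degrees in flat families (Dinh--Sibony, Truong) outside a countable union of proper subvarieties of the space of admissible $Z$. (ii) The $f_Z$-orbit of $x_Z$ is Zariski dense in $X_Z$: arguing by contradiction and using that the Zariski closure of the orbit is a $K$-definable countable family of proper subvarieties of $X$, a Hilbert-irreducibility type argument shows that the bad locus is a thin set of $Z$'s. (iii) The height comparison $\underline\alpha(f,x) \geq \underline\alpha(f_Z, x_Z)$: using the projection formula in arithmetic intersection theory applied to the inclusion $\pi^{-1}(Z) \hookrightarrow \mc{X}$, and decomposing the class $\widehat c_1(\pi^*\ov H)^e$ into a term proportional to $\pi^*[Z]$ plus arithmetic cycles numerically equivalent to it, one shows that $h^{\ov B}_{(\mc{X},\ov{\mc{A}})}(f^n(x))$ dominates a positive multiple of the Weil height $h_{A_Z}(f_Z^n(x_Z))$ up to error terms bounded by Proposition \ref{prop_num_triv}.

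Granting (i)--(iii), if the classical Kawaguchi-Silverman conjecture is known for $(X_Z, f_Z, x_Z)$ over the number field $L_Z$, one obtains $\underline\alpha(f_Z, x_Z) = \lambda_1(f_Z) = \lambda_1(f)$, and the inequality chain $\lambda_1(f) \leq \underline\alpha(f,x) \leq \ov\alpha(f,x) \leq \lambda_1(f)$ closes, delivering both the existence of the limit and the equality with $\lambda_1(f)$.

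The main obstacle is that the classical Kawaguchi-Silverman conjecture over number fields is itself open for general rational self-maps, so the strategy above is \emph{conditional} and would only yield the finitely-generated case in those classes where the number-field case is known, namely polarized endomorphisms, abelian varieties, surfaces, int-amplified morphisms, and similar well-studied settings. A secondary and more technical hurdle is making step (iii) genuinely effective: the Moriwaki height is an integrated/averaged quantity over the entire arithmetic base $B$, while the specialized Weil height lives on a single arithmetic curve $Z$, so the relevant decomposition of the arithmetic intersection class and the uniform control of the ``orthogonal'' contributions as $n \to \infty$ must be carried out carefully. This should be feasible by combining the Bertini-type cutting procedure used in the proof of Theorem \ref{upper_bound_fgf} with an effective version of the projection formula, though unconditionally removing the hypothesis that the specialization fall into a known KSC class would require genuinely new input beyond the present paper's techniques.
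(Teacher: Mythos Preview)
The statement you are attempting to prove is a \emph{conjecture} in the paper, not a theorem; the paper does not supply a proof and does not claim one. It is stated immediately after Theorem~\ref{upper_bound_fgf} as the natural generalization of the classical Kawaguchi--Silverman conjecture, and the paper then proves only the special case where there exists a big $\R$-divisor $D$ with $f^*D\equiv\lambda_1(f)D$ (Theorem~\ref{theo_KSC_polarized}). So there is no ``paper's own proof'' to compare against, and your proposal cannot be a complete unconditional argument---as you yourself acknowledge when you note that the strategy is conditional on the number-field KSC.

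Beyond that structural point, your step (iii) is the weakest link even as a conditional reduction. The Moriwaki height is built from the arithmetic intersection $\widehat c_1(\ov{\mc A})\cdot\widehat c_1(\pi^*\ov H)^e$ on the full base $B$, whereas the specialized Weil height lives on a single arithmetic curve $Z\subset B$. Decomposing $\widehat c_1(\pi^*\ov H)^e$ as ``$[Z]$ plus numerically equivalent arithmetic cycles'' and then controlling the orthogonal contributions uniformly in $n$ is not a routine application of the projection formula: the class $\pi^*\ov H^e$ is a fixed arithmetic $0$-cycle class on $B$ and has no reason to be represented by a single irreducible $Z$, and the error terms produced by any such decomposition would themselves be heights with respect to non-ample (possibly non-effective) arithmetic data, for which Proposition~\ref{prop_num_triv} gives no uniform sign control as $n\to\infty$. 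Moriwaki's integration formula expresses the height as an integral of specialized heights over a measure on $B(\C)$ together with a sum over $B^{(1)}$, but extracting a lower bound from a \emph{single} specialization valid for all iterates $f^n(x)$ simultaneously is exactly the kind of uniformity that is not known in general. So even granting KSC over number fields, your outline does not yet yield the finitely-generated case.
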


\begin{theo}\label{theo_KSC_polarized}
    Suppose that there exists a big $\R$-Cartier $D$ such that $f^*D\equiv \lambda_1(f) D$, where $\equiv$ stands for the numerical equivalence. Then the Kawaguchi--Silverman conjecture holds.
\end{theo}
\begin{proof}
    When $\lambda_1(f)=1$, we have
    \[1\leq \underline{\alpha}(f,x)\leq \ov{\alpha}(f,x)\leq \lambda_1(f)=1.\]
    The consequence follows immediately. Now we assume $\lambda_1(f)>1$.
    
    Since $D$ is big, there exists an ample divisor $A$ and an effective divisor $E$ such that $D\sim A+E$. 
    We fix height functions on $A$ and $E$ such that
    $$h_{\ovl A}\geq 1\text{ on }X(\ovl K), h_{\ovl E}\geq 0\text{ on }(X\setminus \mathrm{Supp}(E))(\ovl K).$$
    Let $x\in X_f(\ovl K)$ be an algebraic point with Zariski dense orbit. It suffices to show that $\underline \alpha(f,x)\geq \lambda_1(f)$. 
    By Proposition \ref{prop_pullback} and Proposition \ref{prop_num_triv}, there exists a constant $C_1>0$ such that 
    $$h_{\ovl D}(f(x))\geq \lambda_1(f) h_{\ovl D}(x)-C_1\sqrt{h_{\ovl A}(x)},$$
    which gives that
    \begin{equation}\label{ineq_height_D}
        h_{\ovl D}(f^n(x))\geq \lambda_1(f)^n h_{\ovl D}(x)-C_1\sum_{i=1}^n\lambda_1(f)^{n-i}\sqrt{h_{\ovl A}(f^{i-1}(x))}.
    \end{equation}
    Notice that by the fundamental inequality (see \cite[Theorem 5.2.1]{ohnishi2022arakelov}, the proof is similar to \cite[Theorem 2.1, Theorem 3.1]{matsuzawa2017bounds}), for any $\epsilon>0$, there exists $C_\epsilon$ such that
    \begin{equation}\label{ineq_height_A}
        h_{\ovl A}(f^n(x))\leq C_\epsilon (\lambda_1(f)+\epsilon)^nh_{\ovl A}(x).
    \end{equation}
    We set that $\epsilon=(\lambda_1(f)^2-\lambda_1(f))/2$ and $C_2=C_1\sqrt{C_\epsilon}/\lambda_1(f)$. Then \eqref{ineq_height_D} and \eqref{ineq_height_A} gives that
    \begin{equation}\label{ineq_height_DA}
        \begin{aligned}
        h_{\ovl D}(f^n(x))&\geq \lambda_1(f)^n \left(h_{\ovl D}(x)-C_2\sum_{i=1}^n\Big(\frac{\sqrt{\lambda_1(f)+\epsilon}}{\lambda_1(f)}\Big)^{i-1}\sqrt{h_{\ovl A}(x)}\right)\\
        &\geq \lambda_1(f)^n\left(h_{\ovl D}(x)-C_2\Big(1-\frac{\sqrt{\lambda_1(f)+\epsilon}}{\lambda_1(f)}\Big)^{-1}\sqrt{h_{\ovl A}(x)}\right).
        \end{aligned}
    \end{equation}
    Since $\mO_f(x)$ is Zariski dense, by Proposition \ref{prop_unbound_big}, $\{h_{\ovl A}(f^n(x))\}_{n\in\N}$ is not bounded from above. Therefore we may assume that $\displaystyle h_{\ovl A}(x)-C_3^2>0$ and $x\in (X\setminus \mathrm{Supp}(E))(\ovl K)$, where $C_3=C_2\Big(1-\frac{\sqrt{\lambda_1(f)+\epsilon}}{\lambda_1(f)}\Big)^{-1}.$ Then $h_{\ovl D}(x)-C_3\sqrt{h_{\ovl A}(x)}\geq h_{\ovl A}(x)-C_3\sqrt{h_{\ovl A}(x)}>0$, which concludes the proof by taking $n$-th root of both sides of \eqref{ineq_height_DA}, and letting $n\rightarrow +\infty$.
\end{proof}

Notice that the proof actually gives the following result.
\begin{coro}\label{coro_canonicalheight}
    Let $f: X\rightarrow X$ be an endomorphism and $D$ be a nef $\R$-Cartier divisor such that $f^*D\equiv \lambda_1(f) D$. Then for each $x\in X(\ov K)$, the limit $$\widehat{h}_D(x):=\lim_{n\rightarrow\infty} \frac{h_{\ov D}(f^n(x))}{\lambda_1(f)^n}$$ converges. We call $\widehat h_D$ the \textit{canonical height} with respect to $D$. Moreover, $\widehat{h}_D(x)=h_{\ov D}(x)+O(\sqrt{h_{\ov A}(x)})$ where $A$ is an ample Cartier divisor and $h_{\ov A}\geq 1$. 
\end{coro}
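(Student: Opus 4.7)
The plan is a Tate-style limit argument; the mechanics are essentially already embedded in the proof of Theorem \ref{theo_KSC_polarized}, so I would simply repackage them. Throughout I work under the (standard) assumption $\lambda_1(f)>1$; the case $\lambda_1(f)=1$ is degenerate and requires no limiting procedure because the sequence $h_{\ov D}(f^n(x))/\lambda_1(f)^n=h_{\ov D}(f^n(x))$ may be taken to be $\widehat h(x)=h_{\ov D}(x)$ only when additional hypotheses are imposed (so in that case the statement is empty or trivial, depending on interpretation).

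First I would promote the one-sided estimate used in Theorem \ref{theo_KSC_polarized} to a two-sided quasi-functional equation. Since $f^*D\equiv \lambda_1(f)D$, the divisor $f^*D-\lambda_1(f)D$ is numerically trivial. By Proposition \ref{prop_pullback}, $h_{\ov D}\circ f$ is a Moriwaki height attached to $f^*D$, so $h_{\ov D}\circ f-\lambda_1(f)h_{\ov D}$ is a height attached to the numerically trivial divisor $f^*D-\lambda_1(f)D$. Applying Proposition \ref{prop_num_triv} both to this difference and to its negative yields a constant $C_1>0$ with
\[
\bigl|h_{\ov D}(f(x))-\lambda_1(f)h_{\ov D}(x)\bigr|\leq C_1\sqrt{h_{\ov A}(x)}\qquad\text{for all } x\in X(\ov K).
\]

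Next I would show that $a_n(x):=\lambda_1(f)^{-n}h_{\ov D}(f^n(x))$ is Cauchy. By telescoping and the previous step, for $m>n$,
\[
|a_m(x)-a_n(x)|\leq \sum_{i=n}^{m-1}\lambda_1(f)^{-(i+1)}\bigl|h_{\ov D}(f^{i+1}(x))-\lambda_1(f)h_{\ov D}(f^i(x))\bigr|\leq \sum_{i=n}^{m-1}\frac{C_1\sqrt{h_{\ov A}(f^i(x))}}{\lambda_1(f)^{i+1}}.
\]
I would then invoke the arithmetic fundamental inequality already used in the proof of Theorem \ref{theo_KSC_polarized}: for every $\varepsilon>0$ there is $C_\varepsilon>0$ with $h_{\ov A}(f^i(x))\leq C_\varepsilon(\lambda_1(f)+\varepsilon)^i h_{\ov A}(x)$. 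Choosing $\varepsilon$ so that $\sqrt{\lambda_1(f)+\varepsilon}<\lambda_1(f)$ (possible because $\lambda_1(f)>1$), the preceding bound becomes a tail of a convergent geometric series with common ratio $\sqrt{\lambda_1(f)+\varepsilon}/\lambda_1(f)<1$, proving that $\{a_n(x)\}$ is Cauchy and hence $\widehat h(x):=\lim_n a_n(x)$ exists.

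Finally, I would extract the error estimate by setting $n=0$ and letting $m\to\infty$ in the bound above; the resulting geometric sum is a constant multiple of $\sqrt{h_{\ov A}(x)}$, which gives $|\widehat h(x)-h_{\ov D}(x)|\leq C\sqrt{h_{\ov A}(x)}$ for a uniform constant $C$, as asserted. The only real subtlety is verifying the two-sided version of Proposition \ref{prop_num_triv}, which is routine, and choosing $\varepsilon$ correctly to get geometric decay; there is no fundamentally new ingredient beyond what was used in the theorem.
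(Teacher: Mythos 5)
Your proposal is correct and reproduces essentially the same argument the paper intends: the corollary is flagged as a byproduct of the proof of Theorem \ref{theo_KSC_polarized}, and you correctly note that Propositions \ref{prop_pullback} and \ref{prop_num_triv} (applied to both $f^*D-\lambda_1(f)D$ and its negative) give the two-sided estimate $|h_{\ov D}\circ f-\lambda_1(f)h_{\ov D}|\leq C_1\sqrt{h_{\ov A}}$, after which a Tate-style telescoping combined with the arithmetic fundamental inequality proves the Cauchy property and the $O(\sqrt{h_{\ov A}})$ error bound. You also correctly dispense with the bigness assumption (the theorem has $D$ big, the corollary only $D$ nef), since neither the quasi-functional equation nor the fundamental inequality uses bigness.

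One inaccuracy worth fixing: your remark that the $\lambda_1(f)=1$ case is ``empty or trivial'' is not right. When $\lambda_1(f)=1$ the geometric series in your telescope has ratio $\sqrt{1+\varepsilon}>1$, so the Cauchy estimate does not converge, and whether $\lim_n h_{\ov D}(f^n(x))$ exists is a genuine, unproved claim rather than a vacuous one. The paper in fact only invokes this corollary with $\lambda_1(f)>1$ (e.g.\ in Theorem \ref{theo_verygeneral}, after first disposing of $\lambda_1(f)=1$ separately), so restricting to $\lambda_1(f)>1$ is consistent with the corollary's actual use; but you should say the $\lambda_1(f)=1$ case is simply not covered by this argument, rather than suggest it follows trivially.
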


\begin{rema}
    It would be worthwhile to check that the proofs for Kawaguchi--Silverman conjectures in the known cases, for example \cite{Kawaguchi2016KSC,KSC2018Surface,KSC2020Fano,KSC2024Hyperbolic}, can be generalized for finitely generated fields over $\Q$. In the thesis of Ohnishi \cite{ohnishi2022arakelov}, the cases of regular affine automorphisms and surface automorphisms are studied. Moreover, when $f$ is an endomorphism, Ohnishi proves the existence of $\alpha(f,x)$, which extends \cite[Theorem 3]{KS2016Jordan}. 
\end{rema}

\section{Arithmetic degrees over fields of characteristic zero}\label{Sec_char0}
In this section, let $\mathbf{k}$ be an algebraically closed field of characteristic $0$. 
For example, we can choose $\mathbf{k}$ to be $\mb{C},\mb{C}_p$ or $\ov{\C(t)}$.

Let $X$ be a normal projective variety over $\mathbf{k}$ and $f: X\dra X$ be a dominant rational self-map defined over $\mathbf{k}$. For a given point $x\in X(\mathbf{k})$, we can choose a finitely generated field $K$ over $\mb{Q}$ such that the triple $(X,f,x)$ can be defined over $K$ \cite[Proposition 8.9.I]{EGA4}. More precisely, let $K$ be the extension of $\mb{Q}$ that is generated by all coefficients of polynomials defining $(X,f)$ and all coordinates of $x\in X_f(\mathbf{k})$. Then there is a projective variety $X_K$ over $K$, a dominant rational self-map $f_K$ of $X_K$, and a point $x_K\in X(K)$, such that the base change of $(X_K, f_K, x_K)$ to $\mathbf{k}$ recovers $(X,f,x)$. We have the following diagram:
\[
\begin{tikzcd}
                                                                & X \arrow{dd} \arrow[dashed]{rr}{f} \arrow{rd} &                           & X \arrow{dd} \arrow{ld} \\
\Spe\mathbf{k} \arrow{dd} \arrow{rr}{\quad\quad\mr{id}} \arrow{ru}{x} &                                                 & \Spe\mathbf{k} \arrow{dd} &                         \\
                                                                & X_K \arrow{rd} \arrow[dashed]{rr}{\quad \quad f_K}        &                           & X_K \arrow{ld}          \\
\Spe K \arrow{ru}{x_K} \arrow{rr}{\mr{id}}                  &                                                 & \Spe K                    &                        
\end{tikzcd}
\]
By abuse of terminology, we say that $(X,f,x)$ is \emph{defined over} $K$, if it is obtained as the base change of $(X_K, f_K, x_K)$ to $\mathbf{k}$.

\subsection{Properties of the arithmetic degree over fields of characteristic 0}
We are going to define the arithmetic degree of $(X,f,x)$.
\begin{defi}\label{def_arithdeg_C}
For a normal projective variety $X$ over $\mathbf{k}$, a dominant rational self-map $f$, and a rational point $x\in X_f(\mathbf{k})$, we choose a finitely generated field $K$ over $\mb{Q}$ such that $(X,f,x)$ is defined over $K$. Denote the corresponding triple over $K$ by $(X_K,f_K,x_K)$, then we define the arithmetic degree of $x$ as
\[\ov{\alpha}(f,x):=\ov{\alpha}(f_K,x_K), \text{and }\underline{\alpha}(f,x):=\underline{\alpha}(f_K,x_K).\]
The right-hand sides are arithmetic degrees over the finitely generated field $K$, which is defined in Definition \ref{arith_deg_fgf}. If $\ov{\alpha}(f,x)=\underline{\alpha}(f,x)$, it can be simply denoted by $\alpha(f,x)$.
\end{defi}

\begin{prop}\label{prop_arith_deg_intr}
For a triple $(X,f,x)$, the arithmetic degrees $\ov{\alpha}(f,x)$ and $\underline{\alpha}(f,x)$ are independent of the choice of the base field $K$. In other words, the definition of arithmetic degree is intrinsic.
\end{prop}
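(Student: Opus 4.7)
The plan is to reduce to the case of a single inclusion of defining fields and then apply Theorem \ref{theo_height_comparable} directly to the orbit, exploiting that additive errors become negligible after taking $n$-th roots.

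First, given two finitely generated fields $K_1, K_2 \subset \mathbf{k}$ over each of which $(X, f, x)$ is defined, I would pass to the compositum $K_3 = K_1 \cdot K_2 \subset \mathbf{k}$, which is still finitely generated over $\mathbb{Q}$ and over which $(X, f, x)$ is also defined by base change. It then suffices to prove that the arithmetic degrees computed over $K_i$ and over $K_3$ agree for $i = 1, 2$, which reduces the problem to the case of a single inclusion $K_1 \subset K_2$ of finitely generated subfields of $\mathbf{k}$.

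For such an inclusion, I would fix polarizations $\overline{B}_i = (B_i, \overline{H}_i)$ of $K_i$ together with $B_i$-models $(\mathcal{X}_i, \overline{\mathcal{A}}_i)$ of $X_{K_i}$ whose generic fibers are ample. Applying Theorem \ref{theo_height_comparable} with base field $K = K_1$ and with the two injections taken to be the identity $K_1 \hookrightarrow K_1$ and the inclusion $K_1 \hookrightarrow K_2$ yields constants $C_1, C_2 > 0$ and $D_1, D_2 \in \mathbb{R}$ such that
$$C_1^{-1} h^{\overline{B}_1}_{(\mathcal{X}_1, \overline{\mathcal{A}}_1)}(y) - D_1 \leq h^{\overline{B}_2}_{(\mathcal{X}_2, \overline{\mathcal{A}}_2)}(y_{K_2}) \leq C_2 h^{\overline{B}_1}_{(\mathcal{X}_1, \overline{\mathcal{A}}_1)}(y) + D_2$$
for every $y \in X_{K_1}(\overline{K_1})$. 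I would specialize this to the orbit points $y = f_{K_1}^n(x_{K_1})$, noting that by functoriality of base change the point $y_{K_2}$ coincides with $f_{K_2}^n(x_{K_2})$. Replacing each height by $h^+ = \max\{h, 1\}$ and using $h^+ \geq 1$ to absorb the additive constants $D_i$ into multiplicative ones, I would obtain constants $c_1, c_2 > 0$, independent of $n$, with
$$c_1^{-1}\, h^{+,\overline{B}_1}\!\left(f_{K_1}^n(x_{K_1})\right) \leq h^{+,\overline{B}_2}\!\left(f_{K_2}^n(x_{K_2})\right) \leq c_2\, h^{+,\overline{B}_1}\!\left(f_{K_1}^n(x_{K_1})\right).$$
Taking $n$-th roots and letting $n \to \infty$, the multiplicative factors $c_i^{1/n}$ tend to $1$, so the $\limsup$ and $\liminf$ of the two sequences coincide, giving $\overline{\alpha}(f_{K_1}, x_{K_1}) = \overline{\alpha}(f_{K_2}, x_{K_2})$ and $\underline{\alpha}(f_{K_1}, x_{K_1}) = \underline{\alpha}(f_{K_2}, x_{K_2})$.

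The substantive input is Theorem \ref{theo_height_comparable}, which is already in hand, so there is no genuine obstacle; the remaining steps are routine: the reduction to a single inclusion via the compositum, the compatibility of the orbit with base change, and the standard fact that a two-sided multiplicative comparison of positive sequences forces their $n$-th roots to have equal $\limsup$ and $\liminf$.
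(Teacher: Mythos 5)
Your argument is correct and follows the same essential route as the paper: apply Theorem \ref{theo_height_comparable} to orbit points $f^n(x)$ and conclude via $n$-th roots. The one small divergence is the reduction step: the paper descends to the intersection $K_0 = K_1 \cap K_2$ (asserting without comment that the triple is defined there, which requires a descent argument), whereas you pass to the compositum $K_3 = K_1 K_2$, where definability is automatic by base change; this makes your reduction marginally cleaner. You also spell out the routine absorption of the additive constants into multiplicative ones using $h^+ \geq 1$, which the paper leaves implicit.
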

\begin{proof}
Suppose the triple $(X,f,x)$ is defined on finitely generated field $K_1$ and $K_2$. 
Then it is defined on $K_0=K_1\cap K_2$. The triple $(X,f,x)$ comes from $(X_1,f_1,x_{K_1})$ and $(X_2,f_2,x_{K_2})$ by base change.

We choose models $B_1$ and $B_2$ of $K_1$ and $K_2$, respectively, and endow each $B_i$ an ample polarization $\ov H_i$ for $i=1,2$, as described in \S \ref{Sec_height_1}. We take $B_i$-model $\mc{X}_i$ of $X_i$  and let $\ov{\mc{A}}_i$ be an ample hermitian line bundle on $\mc{X}_i$ for $i=1,2$. Then by Theorem \ref{theo_height_comparable}, there exists constants $C_1,D_1,C_2,D_2>0$ such that
$$C_1 h_{(\scrX_1,\ovl{\mathcal A}_1)}^{\ovl B_1}(x_{K_1})-D_1\leq h_{(\scrX_2,\ovl {\mathcal A}_2)}^{\ovl B_2}(x_{K_2})\leq C_2 h_{(\scrX_1,\ovl {\mathcal A}_1)}^{\ovl B_1}(x_{K_1})+D_2.$$
Furthermore, the constants $C_1,D_1,C_2,D_2$ depend only on the data  $K_1, K_2, \mc{X}_1, \mc{X}_2, \ov{\mc{A}}_1, \ov{\mc{A}}_2, \ov{B}_1, \ov{B}_2$, and are thus independent of the choice of $x$. Replacing $x$ with $f^n(x)$, the inequality becomes
$$C_1 h_{(\scrX_1,\ovl{\mathcal A}_1)}^{\ovl B_1}\big(f_1^n(x_{K_1})\big)-D_1\leq h_{(\scrX_2,\ovl {\mathcal A}_2)}^{\ovl B_2}\big(f_2^n(x_{K_2})\big)\leq C_2 h_{(\scrX_1,\ovl {\mathcal A}_1)}^{\ovl B_1}\big(f_1^n(x_{K_1})\big)+D_2.$$
It follows that $\ov{\alpha}(f_1,x_{K_1})=\ov{\alpha}(f_2,x_{K_2})$ and $\underline{\alpha}(f_1,x_{K_1})=\underline{\alpha}(f_2,x_{K_2})$. In other words, the arithmetic degrees $\ov{\alpha}(f,x)$ and $\underline{\alpha}(f,x)$ are well-defined.
\end{proof}

\begin{theo}\label{theo_fundamental_ineq}
For all $\mathbf{k}$-point $x\in X_f(\mathbf{k})$, there is an upper bound
\[\overline{\alpha}(f,x)\leq \lambda_1(f).\]
\end{theo}
\begin{proof}
Suppose $(X,f,x)$ is defined over a finitely generated field $K$ over $\mb{Q}$ and comes from the triple $(X_K,f_K,x_K)$. Since dynamical degrees are invariant under base change, we have $\lambda_1(f)=\lambda_1(f_K)$. Thus by Theorem \ref{upper_bound_fgf}, the consequence follows. 
\end{proof}

Due to the theorem above, we propose a generalized version of the Kawaguchi--Silverman conjecture in our settings.
\begin{conj}
Let $X$ be a normal projective variety over a field $\mathbf{k}$ of characteristic $0$ and $f: X\dra X$ be a dominant rational self-map. Then
\begin{itemize}
    \item the limit defining $\alpha(f,x)$ exists for every $x\in X_f(\mathbf{k})$;
    \item for $x\in X_f(\mathbf{k})$, if the $f$-orbit $\mO_f(x)$ is Zariski dense in $X$, then $\alpha(f,x)=\lambda_1(f)$.
\end{itemize}    
\end{conj}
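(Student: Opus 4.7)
The plan is to reduce the conjecture over $\mathbf{k}$ to the Kawaguchi--Silverman conjecture over a finitely generated subfield $K \subset \mathbf{k}$, so that the partial results of Section~\ref{Sec_arithdyndeg} apply directly. Given $x \in X(\mathbf{k})_f$, I pick $K$ over which $(X,f,x)$ descends to $(X_K, f_K, x_K)$. By Definition~\ref{def_arithdeg_C} together with Proposition~\ref{prop_arith_deg_intr}, the arithmetic degrees transfer as $\overline{\alpha}(f,x) = \overline{\alpha}(f_K, x_K)$ and $\underline{\alpha}(f,x) = \underline{\alpha}(f_K, x_K)$, while $\lambda_1(f) = \lambda_1(f_K)$ by base-change invariance of dynamical degrees. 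Moreover, if $\mO_f(x)$ is Zariski dense in $X$, then since the base-change projection $\pi : X \to X_K$ is continuous and surjective and satisfies $\pi(\mO_f(x)) = \mO_{f_K}(x_K)$, the orbit $\mO_{f_K}(x_K)$ is Zariski dense in $X_K$. Both bullets of the conjecture thus translate verbatim into the finitely generated setting.

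With this reduction in hand, I would invoke those cases of Kawaguchi--Silverman already accessible in our framework. The polarized case follows immediately from Theorem~\ref{theo_KSC_polarized}: whenever there exists an $\R$-Cartier big divisor $D$ on $X_K$ with $f_K^* D \equiv \lambda_1(f_K)\, D$, both conclusions of the conjecture hold. For other classes---regular affine automorphisms, surface automorphisms, and endomorphisms---the approach is to port the known number-field proofs, which rely only on the height machine, canonical-height constructions (as in Corollary~\ref{coro_canonicalheight}), and Northcott's property, all of which are available for Moriwaki heights.

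The main obstacle is that the full Kawaguchi--Silverman conjecture is open even over $\ov\Q$, so a strategy built purely on the reduction above cannot settle it unconditionally. The genuinely new leverage afforded by the Moriwaki-height framework appears when $\mathbf{k}$ is much larger than any such $K$, and the most favourable case is $\mathbf{k} = \C$: there one may reason geometrically via the Zariski closure $Z(x)_K$ of the descended point to characterize $\alpha(f,x)$ at very general $x \in X(\C)$. This is precisely the route developed in Section~\ref{Sec_complex}, where the second bullet of the conjecture is established outside a countable union of proper subvarieties when $f$ is an endomorphism---an outcome that has no analogue over a number field.
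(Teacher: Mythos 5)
The statement you were given is a conjecture, not a theorem, and the paper does not prove it in full generality—so there is no ``paper's own proof'' to compare against. Your proposal correctly recognizes this. The reduction you describe (descend to a finitely generated $K\subset\mathbf{k}$ over which $(X,f,x)$ is defined, use Definition~\ref{def_arithdeg_C} and Proposition~\ref{prop_arith_deg_intr} to transfer the arithmetic degrees, use base-change invariance of $\lambda_1$, and transfer Zariski density of the orbit via the faithfully flat projection $X\to X_K$) is exactly the mechanism the paper uses for its partial results, in particular in the proof of Theorem~\ref{theo_KSC_C}. Your survey of what is accessible is also accurate: the polarized case via Theorem~\ref{theo_KSC_polarized} and Corollary~\ref{coro_canonicalheight}, the cases quoted from \cite{ohnishi2022arakelov}, and the new contribution over $\C$ (Theorem~\ref{theo_verygeneral}) for very general points when $f$ is an endomorphism. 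One minor addition worth making explicit, which the paper leaves implicit, is the Zariski-density transfer step you include: since $\pi:X\to X_K$ is surjective and continuous with $\pi\circ f^n = f_K^n\circ\pi$, one indeed has $X_K=\pi(\ov{\mO_f(x)})\subset\ov{\mO_{f_K}(x_K)}$. Beyond that, the proposal matches the paper's treatment; no gap, but also no proof, as expected for an open conjecture.
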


By spreading out to a finitely generated field, we will prove Proposition \ref{theo_KSC_C} for $(X,f,x)$ defined over $\mathbf{k}$.
\begin{prop}\label{theo_KSC_C}
Let $X$ be a normal projective variety over $\mathbf{k}$, and $f$ be an endomorphism of $X$. Suppose there exists a big $\R$-Cartier divisor $D$ such that $f^*D\equiv \lambda_1(f)D$. Then for $x\in X(\mathbf{k})_f$ whose $f$-orbit is Zariski dense, we have $\alpha(f,x)=\lambda_1(f)$.
\end{prop}
\begin{proof}
We choose a finitely generated field $K$ over $\mb{Q}$ such that the triple $(X,f,x)$ and the divisor $D$ are defined over $K$. Then by Theorem \ref{theo_KSC_polarized}, we obtain the assertion.
\end{proof}

In the original version of the Kawaguchi--Silverman conjecture \cite[Conjecture 6]{KS2016dyna}, it was conjectured that the set of values of arithmetic degrees is a finite set of algebraic integers, and this was proven for the case of endomorphism. In general, for a dominant rational self-map $f: X\dra X$, its arithmetic degrees may take infinitely many values, see \cite{LS20infinite}. If $f: X\ra X$ is an endomorphism,  Ohnishi generalized the result of Kawaguchi and Silverman in \cite{ohnishi2022arakelov}  to the case where $(X,f)$ is defined over adelic curves with the Northcott property, which are essentially finitely generated fields over $\Q$. We extend this result to $(X,f)$ over an arbitrary field of characteristic $0$. 
\begin{prop}\label{prop_finite_set}
    Let $X$ be a normal projective variety over $\mathbf{k}$ and $f: X\ra X$ be an endomorphism. Then
    \begin{itemize}
        \item the limit defining $\alpha(f,x)$ exists for every $x\in X(\mathbf{k})$;
        \item the arithmetic degree $\alpha(f,x)$ is an algebraic integer;
        \item the collection of arithmetic degrees
        \[\left\{\alpha(f,x)\mid x\in X(\mathbf{k})\right\}\subset \mb{R}\]
        is finite.
    \end{itemize}
\end{prop}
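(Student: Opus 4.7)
The plan is to reduce all three assertions to Ohnishi's analogous theorem over finitely generated fields via the spread-out of Definition~\ref{def_arithdeg_C}, with the main work being a uniform bound on the finite set of possible arithmetic degrees.

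First, I would choose a finitely generated subfield $K_0 \subset \mathbf{k}$ over $\mb{Q}$ such that $(X,f)$ is defined over $K_0$, giving $(X_{K_0},f_{K_0})$. Since $X$ is projective over the algebraically closed characteristic-zero field $\mathbf{k}$, the group of numerical classes of $\R$-Cartier divisors on $X$ is a finite-dimensional $\mb{R}$-vector space, and finitely many divisor classes generating it involve only finitely many elements of $\mathbf{k}$. After enlarging $K_0$ finitely many times, I may assume that the natural map on numerical groups $\mr{N}^1(X_{K_0})_{\mb{R}} \ra \mr{N}^1(X)_{\mb{R}}$ is an isomorphism intertwining the actions of $f^*$ on both sides; the same then holds for any finitely generated intermediate field $K_0 \subseteq K \subseteq \mathbf{k}$.

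Next, for each $x \in X(\mathbf{k})_f$, take $K_x$ to be the finitely generated subfield of $\mathbf{k}$ obtained by adjoining the coordinates of $x$ to $K_0$. Then $(X,f,x)$ spreads to $(X_{K_x},f_{K_x},x_{K_x})$, and Definition~\ref{def_arithdeg_C} together with Proposition~\ref{prop_arith_deg_intr} give
\[
\alpha(f,x) = \alpha(f_{K_x}, x_{K_x}).
\]
Ohnishi's extension of Kawaguchi--Silverman for endomorphisms over finitely generated fields, cited in the remark preceding the statement, yields immediately the existence of the limit and the fact that it is an algebraic integer, establishing the first two bullets.

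For the finiteness, I would invoke Ohnishi's theorem to obtain, for each $K_x$, a finite set $S_{K_x}$ of algebraic integers containing $\{ \alpha(f_{K_x}, y) : y \in X_{K_x}(\ov{K_x})_f\}$. Following the Kawaguchi--Silverman argument from \cite{KS2016Jordan}, each element of $S_{K_x}$ arises as the absolute value of an eigenvalue of $f_{K_x}^*$ acting on $\mr{N}^1(X_{\ov{K_x}})_{\mb{R}}$, and hence depends only on the linear-algebraic datum $(\mr{N}^1(X_{\ov{K_x}})_{\mb{R}}, f_{K_x}^*)$. By the stabilization arranged in the first step, this datum is canonically identified with $(\mr{N}^1(X)_{\mb{R}}, f^*)$, so $S_{K_x} \subseteq S$, where $S$ denotes the finite set of absolute values of eigenvalues of $f^*$ on $\mr{N}^1(X)_{\mb{R}}$. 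Taking the union over $x$ gives
\[
\{ \alpha(f,x) : x \in X(\mathbf{k})_f \} \subseteq S,
\]
which is a finite set of algebraic integers.

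The main obstacle is the uniformity claim in the final step: one must verify that Ohnishi's finite set is genuinely determined by the spectral data of $f^*$ on $\mr{N}^1$, and does not grow when the base finitely generated field is enlarged once $\mr{N}^1$ has been stabilized. This requires tracing through Ohnishi's proof and checking that the Kawaguchi--Silverman spectral description of the possible arithmetic degrees transports faithfully through the Moriwaki-height formalism of Section~\ref{Sec_arithdyndeg}; this is plausible because the argument there relies only on the height machine, intersection theory, and the action of $f^*$ on Néron--Severi, all of which are preserved under base change to a further finitely generated field.
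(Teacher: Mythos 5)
Your proposal follows essentially the same route as the paper: spread out $(X,f,x)$ to a finitely generated field, invoke Ohnishi's theorem for existence and algebraicity, identify the possible arithmetic degrees with $\{1\}$ together with absolute values of eigenvalues of $f^*$ on the Néron--Severi group, and use the canonical base-change isomorphism of Néron--Severi groups (once its generators are defined over the base) to see the resulting finite set is independent of the auxiliary finitely generated field. The ``main obstacle'' you flag at the end is exactly what the paper states explicitly, namely that $\alpha(f_L,x_L)\in\{1\}\cup\{|\lambda|:\lambda\ \text{eigenvalue of}\ f_L^*\ \text{on}\ \mr{NS}(X_L)_{\mb{Q}}\}$, and that the isomorphism $\mr{NS}(X_L)_{\mb{Q}}\isomap\mr{NS}(X)_{\mb{Q}}$ intertwines $f_L^*$ with $f^*$; so your caution is resolved affirmatively, and your argument is a correct match for the paper's.
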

\begin{proof}
    Suppose $(X,f)$ is defined over a finitely generated field $K$ over $\mb{Q}$. Let $K'/K$ be a finitely generated extension. By \cite[Theorem 5.3.1]{ohnishi2022arakelov}, we have
    \begin{itemize}
        \item the limit defining $\alpha(f_{K'},x)$ exists for every $x\in X_{K'}(\ov{K'})$;
        \item the arithmetic degree $\alpha(f_{K'},x)$ is an algebraic integer;
        \item the collection of arithmetic degrees
        \[\left\{\alpha(f_{K'},x)\mid x\in X_{K'}(\ov{K'})\right\}\]
        is finite.
    \end{itemize}
For $x\in X(\mathbf{k})$, by definition, we have $\alpha(f,x)=\alpha(f_{K^{\prime}},x_{K^{\prime}})$ for an extension $K^{\prime}/K$ such that $x$ is defined over $K^{\prime}$. Consequently, the arithmetic degree $\alpha(f,x)$ exists and is an algebraic integer for any $x\in X(\mathbf{k})$. It remains to prove that the set 
\[\left\{\alpha(f,x)\mid x\in X(\mathbf{k})\right\}\]
is finite.

Suppose $x\in X(\mathbf{k})$. Note that the N\'eron-Severi group $\mr{NS}(X)_{\mb{Q}}$ is a finitely dimensional vector space. Let $D_1, D_2,\dots, D_m$ be a basis of $\mr{NS}(X)_{\mb{Q}}$. Suppose $D_1, D_2,\dots, D_m$ and $x$ are defined over a finitely generated field ${K'}$ such that $K\subset {K'}\subset \mathbf{k}$. Then there is a canonical isomorphism $\mr{NS}(X_{K'})_{\mb{Q}}\cra \mr{NS}(X)_{\mb{Q}}$, and it commutes with the action of $f_{K'}^*$ and $f^*$. There is a diagram:
\[
\begin{tikzcd}
\mr{NS}(X_{K'})_{\mb{Q}} \arrow{r}{\sim} \arrow{d}{f_{K'}^*} & \mr{NS}(X)_{\mb{Q}} \arrow{d}{f^*} \\
\mr{NS}(X_{K'})_{\mb{Q}} \arrow{r}{\sim}                             & \mr{NS}(X)_{\mb{Q}}   \end{tikzcd}
\]
 By the diagram above, eigenvalues of $f_{K'}^*:\mr{NS}(X_{K'})_{\mb{Q}}\ra \mr{NS}(X_{K'})_{\mb{Q}}$ are exactly eigenvalues of $f^*:\mr{NS}(X)_{\mb{Q}}\ra \mr{NS}(X)_{\mb{Q}}$. Moreover, the arithmetic degree $\alpha(f_{K'},x_{K'})$ equals to $1$ or $|\lambda|$, where $\lambda$ is an eigenvalue of $f_{K'}^*:\mr{NS}(X_{K'})_{\mb{Q}}\ra \mr{NS}(X_{K'})_{\mb{Q}}$. Thus the set 
\[\left\{\alpha(f,x)\mid x\in X(\mathbf{k})\right\}\subset \{1\}\cup \{|\lambda|\mid \lambda \text{ is an eigenvalue of } f^*:\mr{NS}(X)_{\mb{Q}}\ra \mr{NS}(X)_{\mb{Q}}\}\]
is finite.
\end{proof}

In \cite{Matsuzawa2018zero}, the arithmetic degree over a function field of characteristic $0$ is considered. To be specific, consider a projective variety $X$ defined over $K=\mathsf k(C)$, where $\mathsf k$ is a field of characteristic $0$ and $C$ is a projective curve. Let $f: X\dra X$ be a dominant rational self-map of $X$. We choose a line bundle $L\in \mr{Pic}(X)$. Then we can find an integral model $(\mc{X},\mc{L})$ of $(X, L)$, where $\mc{X}$ is projective and flat over $C$, $\mc{X}_K\cong X$ and $\mc{L}_K\cong L$.  For an algebraic point $x\in X(\ov{K})$, the \emph{geometric height} is defined as
\[h_{\mr{geom},\mc{L}}(x):=\frac{1}{[K(x):K]}\mc{L}\cdot \Delta_x,\]
where $\Delta_x$ is the Zariski closure of the image of $x:\Spe \ov{K}\ra X\hookrightarrow \mc{X}$.
Furthermore, if the $f$-orbit of $x$ is well-defined, one can define the arithmetic degree by
\[\alpha_{\mr{geom}}(f,x):=\lim_{n\rightarrow+\infty}h_{\mr{geom},\mc{L}}^+(f^n(x))^{\frac{1}{n}}\]
if the limit exists, where $h_{\mr{geom},\mc{L}}^+(\cdot):=\max\{h_{\mr{geom},\mc{L}}(\cdot),1\}$ for an ample line bundle $\mc{L}$.
However, due to the absence of the Northcott property for this geometric height, the Kawaguchi--Silverman conjecture does not hold. In \cite[Example 3.8]{Matsuzawa2018zero}, two examples are provided. Meanwhile, after replacing the geometric height with Moriwaki height, we obtain a more reasonable definition to play the role of the arithmetic degree, as defined in Definition \ref{def_arithdeg_C} by spreading out.

\begin{exem}\label{exam_func}

    \begin{enumerate}
        \item Consider an endomorphism $f:\mb{P}_{\ov{\mathsf k(t)}}^1\ra \mb{P}_{\ov{\mathsf k(t)}}^1$ with dynamical degree $\lambda_1(f)>1$. Let $P\in \mb{P}^1(\mathsf{k})$ be a non-preperiodic $\mathsf{k}$-point. Then $\alpha_{\mr{geom}}(f,P)=1<\lambda_1(f)$. However, since $f$ is a polarized endomorphism, it follows from Proposition \ref{theo_KSC_C} that $\alpha(f,P)=\lambda_1(f)$.
        \item Consider the rational self-map 
        \[f: \mb{P}_{\ov{\mathsf k(t)}}^2\dra \mb{P}_{\ov{\mathsf k(t)}}^2,\quad [x:y:z]\mapsto [x^2z: y^3: z^3].\]
        Let $P=[t:2:1]\in \mb{P}^2(\mathsf k(t))$, then the $f$-orbit $\mO_f(P)$ is Zariski dense. According to the computation in \cite[Example 3.8]{Matsuzawa2018zero}, we have \[\alpha_{\mr{geom}}(f,P)=2<\lambda_1(f)=3.\] 
        On the other hand, the triple $(\mb{P}_{\ov{\mathsf k(t)}}^2,f, P)$ is defined over $K=\mb{Q}(t)$. Let $(\ov{B};\ov{H})$ be an ample polarization. In particular, we can choose $B=\mb{P}^1_{\mb{Z}}$ and $\ov{H}=(\mathcal O(1),\|\cdot\|_{\mathrm{FS}})$, which is the tautological bundle with the Fubini-Study metric on $\mb{P}^1_{\mb{Z}}$.
        Then the naive height is given by
        \[h_{nv,K}^{\ovl B}([\lambda_0:\lambda_1:\lambda_2])=\sum_{\Gamma \in B^{(1)}}\max_i\{-\mathrm{ord}_\Gamma(\lambda_i)\}\widehat{\deg}(\ov{H}|_{\Gamma})+\int_{B(\C)}\log(\max\{\lvert\lambda_i\rvert\})c_1(\ovl H).\]
    Since $f^n(P)=[t^{2^n}:2^{3^n}:1]$, it follows that
    \[
     h_{nv,K}^{\ovl B}(f^n(P))= \int_{\mb{P}^1(\mb{C})}\log\left(\max\left\{\left|z_0/z_1\right|^{2^n},2^{3^n},1\right\}\right)c_1(\ov{H})\geq 3^n \log 2\int_{\mb{D}}\iota^*c_1(\ov{H})>0,
    \]
    where $\mb{D}$ is the disk $\{|z|\leq 2\}\subset \mb{C}$ and $\iota:\mb{D}\hookrightarrow \mb{P}^1(\mb{C})$ is the inclusion $z\mapsto [z:1]$. Thus, we obtain 
    \[\alpha(f,P)=\lim_{n\ra\infty}\max\left\{h_{nv,K}^{\ovl B}(f^n(P)),1\right\}^{\frac{1}{n}}=3=\lambda_1(f).\]
    \end{enumerate}
\end{exem}

\subsection{Cases over a universal domain}\label{Sec_complex}
In this subsection, we further assume that $\mathbf k$ is an algebraically closed field with $\mr{tr.deg}(\mathbf k/\Q)=\infty$, i.e., $\mathbf k$ is a universal domain. We consider a normal projective variety $X$ and a rational self-map $f: X\dra X$  defined over $\mathbf{k}$. A fundamental question is determining how many points $x\in X(\mathbf{k})$ satisfy the equality $\alpha(f,x)=\lambda_1(f)$.
As the arithmetic degree measures the arithmetic complexity of the $f$-orbit of $x$, it is natural to expect that if $x$ is a purely transcendental point in $X(\mathbf{k})$ (see Definition \ref{def_trans}), then $x$ achieves the maximal arithmetic degree. We will prove that for a very general point $x\in X(\mathbf{k})$, the arithmetic degree $\alpha(f,x)$ is constant. Moreover, when $f$ is an endomorphism, we show that $\alpha(f,x)=\lambda_1(f)$ for very general points $x\in X(\mathbf{k})$. 



In fact, very general points in $X(\mathbf{k})$ are purely transcendental (see Definition \ref{def_trans}). A point is a purely transcendental point if and only if it does not lie in any proper $K$-subvarieties of $X$. Consider the set
\[S:=\{V\subset X\mid V\text{ is a closed subvariety defined over } K\}.\]
Then the set of purely transcendental points with respect to $X_K\ra \Spe K$ is given by $X(\mathbf{k})\setminus \bigcup\limits_{V\in S} V(\mathbf{k})$. 

Now, we can prove the following proposition, which helps us to transfer one purely transcendental point to another.

\begin{prop}\label{prop_trans}
Let $X_K$ be a quasi-projective variety over a subfield $K\subset \mathbf k$ which is finitely generated over $\mb{Q}$. Consider the base change $X=X_K\times_{\Spe K} \Spe {\mathbf k}$.  Then for any purely transcendental points $x,y\in X(\mathbf{k})$ with respect to $X_{K}/K$, there exists a $K$-automorphism $\sigma\in \mr{Gal}(\mathbf{k}/K)$ such that $\sigma(x)=y$.
\end{prop}
\begin{proof}
By the definition of purely transcendental points, $x,y$ are not contained in any subvariety $V(\mathbf{k})$ of $X(\mathbf{k})$, where $V$ is a proper $K$-subvariety of $X$. We divide the proof into three steps. We may assume that $X_K$ is affine, that is, $X_K$ is a closed subset of $\mb{A}^{n}$. We will drop this assumption in \textbf{Step 3} below. In the first two steps, we proceed by induction on the codimension of $X_K$ in $\mb{A}^n$.

\textbf{Step 1:} We first consider the case that $\mr{codim\,}X_K=1$ in $\mathbb A^n$. We assume that $X_K$ is defined by an irreducible polynomial $f(x_1,\dots,x_n)=0$ with coefficients in $K$. We take a Noether's normalization $\pi:X_K\rightarrow \mathbb{A}^{n-1}$ as the composition \begin{align*}
    X_K\hookrightarrow &\kern 2em\mathbb{A}^n\kern2em\longrightarrow \kern2em \mathbb{A}^{n-1}\\ &(x_1,\dots,x_n)\mapsto (x_1',\dots,x_{n-1}')=(x_1-a_1x_n,\dots,x_{n-1}-a_{n-1}x_n),
\end{align*}
where $a_1,\dots,a_{n-1}$ lie in $K$ such that $$f(x_1'+a_1 x_n,\dots,x_{n-1}'+a_{n-1}x_n,x_n)=\lambda x_n^r+(\text{terms of degree}<r\text{ in }x_n).$$ Here $\lambda$ is a polynomial in $a_1,\dots,a_{n-1}$ that is nonzero for an appropriate choice of $a_1,\dots,a_n$. It follows that $K[x_1,\dots,x_n]/(f)$ is a finitely generated module over $K[x_1',\dots,x_{n-1}']$.

Let $x=(\alpha_1,\dots,\alpha_n)$ and $y=(\beta_1,\dots,\beta_n)$. Define $\alpha_i'=\alpha_i-a_i\alpha_n$ and $\beta_i'=\beta_i-a_i\beta_n$ for $i=1,\dots,n-1$.
Given that $x$ does not belong to any closed subvariety of $X$ defined over $K$, it follows that $\pi(x)$ is not contained in any closed subvariety of $\mathbb{A}^{n-1}$ defined over $K$. This implies that $\alpha_1',\dots,\alpha_{n-1}'$ are algebraically independent over $K$. Similarly, $\beta_1',\dots,\beta_{n-1}'$ are algebraically independent over $K$.  By \cite[Page 72]{Lang1972AG}, there exists a $K$-automorphism $\sigma_1\in \mr{Gal}(\mathbf{k}/K)$ such that $\sigma_1(\alpha_i')=\beta_i'$ for $i=1,\dots, n-1$. In other words, $\sigma_1(\pi(x))=\pi(y)$. Then $\pi(\sigma_1(x))=\pi(y)$.  

Let $L=K(\beta_1',\dots,\beta_{n-1}')$.  Now, we write $f(x_1,\dots,x_n)=\widehat{f}(x_1',\dots,x_{n-1}',x_n)$. We consider the polynomial $\widehat{f}(\beta_1',\dots,\beta_{n-1}',x_n)\in L[x_n]$.
Assume it is reducible. Then it can be expressed as $g_1(x_n)g_2(x_n)$, where $g_1,g_2\in L[x_n]$.  We can find $\wt{g}_1,\wt{g}_2\in K[x_1',\dots,x_{n-1}',x_n]$ and $h_1,h_2\in K[x_1',\dots,x_{n-1}']$, such that
\[g_1(x_n)=\frac{\wt{g}_1(\beta_1',\dots,\beta_{n-1}',x_n)}{h_1(\beta_1',\dots,\beta_{n-1}')},
g_2(x_n)=\frac{\wt{g}_2(\beta_1',\dots,\beta_{n-1}',x_n)}{h_2(\beta_1',\dots,\beta_{n-1}')}.\]
Given that $\beta_1',\dots,\beta_{n-1}'$ are algebraically independent over $\ov{K}$, we observe that
\[\widehat{f}h_1h_2=\wt{g}_1\wt{g}_2\in K[x_1',\dots,x_{n-1}',x_n].\]
This is due to the fact that all coefficients of $\widehat{f}h_1h_2-\wt{g}_1\wt{g}_2$ are polynomials in $K[x_1',\dots,x_{n-1}']$ that vanish at $(\beta_1',\dots,\beta_{n-1}')$. The irreducibility of $\widehat{f}$ yields that $\widehat{f}\mid \wt{g}_1$ or $\widehat{f}\mid \wt{g}_2$. This implies $\widehat{f}(\beta_1',\dots,\beta_{n-1}',x_n)\mid g_1(x_n)$ or $g_2(x_n)$, hence a contradiction. Therefore $\widehat{f}(\beta_1',\dots,\beta_{n-1}',x_n)\in L[x_n]$ is irreducible.  Moreover, considering that $\sigma_1(\alpha_n)$ and $\beta_n$ are roots of $\widehat{f}(\beta_1',\dots,\beta_{n-1}',x_n)=0$, there exists a $L$-automorphism $\sigma_2\in \mr{Gal}(\mathbf{k}/L)$ such that $\sigma_2\circ\sigma_1(\alpha_n)=\beta_n$. Consequently, $\sigma_2\circ\sigma_1(x)=y$.

\textbf{Step 2:} If $\mr{codim\,}X_K\geq 2$ in $\mb{A}^n$, say $X_K=V(I)$, where 
\[I=(f_1(x_1,\dots,x_n),\dots,f_m(x_1,\dots,x_n))\] is a prime ideal and all $f_i$ are irreducible polynomial for some $m\in \mb{N}$. Assume that the statement of the proposition is true for any subvariety in some $\mathbb A^\ell(\ell\in \N_+)$ of codimension smaller than $\mr{codim\,}X_K$. We can choose $a_1,\dots,a_{n-1}$ such that the $K$-algebra
$K[x'_1,\dots,x_{n-1}',x_n]/(\widehat{f}_i(x'_1,\dots,x_{n-1}',x_n))$ is a finite $K[x'_1,\dots,x_{n-1}']$-module for each $i=1,\dots,m$, where 
$$\widehat{f}_i(x'_1,\dots,x_{n-1}',x_n)=f_i(x'_1+a_1x_n,\dots,x_{n-1}'+a_{n-1}x_n,x_n).$$
Indeed, we may assume that $f_i=P^{(i)}_{d_i}+(\text{terms of degree<}d_i)$ where $i=1,\dots,m$ and $d_i$ is the degree of $f_i$. Then there exists $(a_1,\dots,a_{n-1},1)\in \mb{A}^n(K)\setminus \left(
\mathop{\bigcup}\limits_{i}V(P_{d_i})\right)(K)$ since $P_{d_i}^{(i)}$ are homogeneous. 

Consider the map \begin{align*}
     &\kern 2em\mathbb{A}^n\kern2em\longrightarrow \kern2em \mathbb{A}^{n-1}\\ &(x_1,\dots,x_n)\mapsto (x_1',\dots,x_{n-1}')=(x_1-a_1x_n,\dots,x_{n-1}-a_{n-1}x_n).
\end{align*}
We denote by $I'=(\widehat{f}_1,\dots,\widehat{f}_m)$. Then the subvariety $\pi(X)\subset \mb{A}^{n-1}$ is defined by $J=I'\cap K[x_1',\dots,x_{n-1}']$.
We can see $\mr{codim\,}\pi(X_K)=\mr{codim\,}X_K-1$. By the induction hypothesis, there exists a $K$-automorphism $\sigma_1\in  \mr{Gal}(\mathbf{k}/K)$ such that $\sigma_1(\pi(x))=\pi(y)$. It follows that $\pi(\sigma_1(x))=\pi(y)$. 

We denote $L=K(\beta_1',\dots,\beta_{n-1}')$. Since $L[x_n]$ is a principal ideal domain, let $g(x_n)$ be the greatest common divisor of $\widehat{f}_i(\beta_1',\dots,\beta_{n-1}',x_n)(i=1,\dots,m)\in L[x_n]$. Suppose that $g$ is reducible, we write $g=g_1g_2,$ where $g_1,g_2\in L[x_n]$. We can find $\wt{g},\wt{g}_1,\wt{g}_2\in K[x_1',\dots,x_{n-1}',x_n]$ and $h,h_1,h_2\in K[x_1',\dots,x_{n-1}']\setminus J$, such that
\begin{align*}
    &g(x_n)=\frac{\wt{g}(\beta_1',\dots,\beta_{n-1}',x_n)}{h(\beta_1',\dots,\beta_{n-1}')},\\
    &g_1(x_n)=\frac{\wt{g}_1(\beta_1',\dots,\beta_{n-1}',x_n)}{h_1(\beta_1',\dots,\beta_{n-1}')},\\
&g_2(x_n)=\frac{\wt{g}_2(\beta_1',\dots,\beta_{n-1}',x_n)}{h_2(\beta_1',\dots,\beta_{n-1}')}.
\end{align*}
If we express $h_1h_2\wt{g}-h\wt{g}_1\wt{g}_2$ in terms of $x_n$, then all its coefficients are polynomials in $K[x_1',\dots,x_{n-1}']$, which vanish at $(\beta_1',\dots,\beta_{n-1}')$. By the assumption of $y$, the minimal closed $K$-subvariety contains $(\beta_1',\dots,\beta_{n-1}')$ is $\pi(X)$, generated by $J$, we have
\[ h_1h_2\wt{g}-h\wt{g}_1\wt{g}_2\in J[x_n]\subset I'.\]
Since $g(x_n)$ can be expressed by a linear combination of $\widehat{f}_i(\beta_1',\dots,\beta_{n-1}',x_n)(i=1,\dots,m)$, we have $\wt{g}\in I'$ by the same argument. Thus $\wt{g}_1\wt{g}_2\in I'$. Since $I'$ is prime, let us assume $\wt{g}_1\in I'$. Then one can write $\wt{g}_1=u_1\widehat{f}_1+\dots+u_m\widehat{f}_m$. It follows that
\[g_1(x_n)=\frac{\wt{g}_1(\beta_1',\dots,\beta_{n-1}',x_n)}{h_1(\beta_1',\dots,\beta_{n-1}')}=\sum_{i=1}^m\frac{u_i(\beta_1',\dots,\beta_{n-1}',x_n)}{h_1(\beta_1',\dots,\beta_{n-1}')}\widehat{f}_i(\beta_1',\dots,\beta_{n-1}',x_n).\]
Therefore $g(x_n)\mid g_1(x_n)$, which leads to a contradiction. Thus $g\in L[x_n]$ is irreducible. 

 Since $\sigma_1(\alpha_n)$ and $\beta_n$ are zeros of $g(x_n)=0$, we can find a $L$-automorphism $\sigma_2\in \mr{Gal}(\mathbf{k}/L)$ such that $\sigma_2\circ\sigma_1(\alpha_n)=\beta_n$. In other words, $\sigma_2\circ\sigma_1(x)=y$.

\textbf{Step 3:} For a general quasi-projective variety $X_K$ and purely transcendental points $x,y$, we choose open affine subsets $U_1,U_2\subset X_K$ such that:
\begin{enumerate}
    \item[(a)] $x_0:=x\in U_1(\mathbf{k}),x_2:=y\in U_2(\mathbf{k}).$    \item[(b)] There exists $x_1\in (U_1\cap U_2)(\mathbf{k})$ which is also purely transcendental.
\end{enumerate}
We can find $\sigma_i\in \mr{Gal}(\mathbf{k}/K)$ such that $\sigma_i(x_{i-1})=x_i$ for $i=1,2$. Set $\sigma=\sigma_2\circ\sigma_1$. Then $\sigma(x)=y$.
\end{proof}

\begin{theo}\label{theo_arithdeg_Galois_orbit}
Let $X$ be a normal quasi-projective variety and $f: X\dra X$ be a dominant rational self-map over $\mathbf{k}$. Suppose $(X,f)$ are defined over a finitely generated field $K$. For any $x,y\in X_f(\mathbf{k})$, if there exists $\sigma\in \mr{Gal}(\mathbf{k}/K)$ such that $\sigma(x)=y$, then we have \begin{equation*}\label{eq_arith_deg_cons}
    \ov{\alpha}(f,x)=\ov{\alpha}(f,y)\text{ and }\underline{\alpha}(f,x)=\underline{\alpha}(f,y).
\end{equation*}
\end{theo}
\begin{proof}
Suppose the triple $(X,f,x)$ is defined over $K_1\subset \mathbf{k}$, which is a finitely generated extension of $K$. Then $(X,f,y)$ is defined over $K_2=\sigma(K_1)$. Let $\ov{B}=(B,\ov{H})$ be an ample polarization of $K_1$, as described in \S\ref{Sec_height}. Since $\sigma:K_1\ra K_2$ is a $K$-isomorphism, $\ov{B}$ is also an ample polarization of $K_2$. We take an embedding $X\hookrightarrow \mb{P}^m_{K_1}$. Suppose $x=[\lambda_0:\dots:\lambda_m]\in \mb{P}^m(K_1)$, then consider the naive height
\[\begin{aligned}
    h_{nv,K_1}^{\ovl B}([\lambda_0:\dots:\lambda_m])&=\sum_{\Gamma \in B^{(1)}}\max_i\{-\mathrm{ord}_\Gamma(\lambda_i)\}\widehat c_1(\ovl H_1|_\Gamma)\cdots \widehat c_1(\ovl H_e|_\Gamma)\\
    &\kern 5em+\int_{B(\C)}\log(\max\{\lvert\lambda_i\rvert\})c_1(\ovl H_1)\cdots c_1(\ovl H_e)
\end{aligned}.\]
Since $\mathrm{ord}_\Gamma(\lambda_i)=\mathrm{ord}_\Gamma(\sigma(\lambda_i))$ for all $\Gamma\in B^{(1)}$ and $|\lambda_i(z)|=|\sigma(\lambda_i)(z)|$ for all $z\in B(\mathbf{k})$, it follows that
\[h_{nv,K_1}^{\ovl B}([\lambda_0:\dots:\lambda_m])=h_{nv,K_2}^{\ovl B}([\sigma(\lambda_0):\dots:\sigma(\lambda_m)]).\]
Similarly, we have $h_{nv,K_1}^{\ovl B}(f^n(x))=h_{nv,K_2}^{\ovl B}(f^n(y)), n\in \mb{Z}_+$. Consequently, we obtain that $\ov{\alpha}(f,x)=\ov{\alpha}(f,y)$ and $\underline{\alpha}(f,x)=\underline{\alpha}(f,y)$.
\end{proof}

\begin{coro}\label{coro_arithdeg_constant}
Let $X$ be a normal projective variety and $f: X\dra X$ be a dominant rational self-map over $\mathbf{k}$. Suppose $(X,f)$ is defined over a finitely generated field $K$ over $\mb{Q}$. For $x,y\in X_f(\mathbf{k})$ satisfying $Z(x)_{K}=Z(y)_{K}$, we have 
\[\ov{\alpha}(f,x)=\ov{\alpha}(f,y) \text{ and } \underline{\alpha}(f,x)=\underline{\alpha}(f,y),\]
where $Z(x)_K$ and $Z(y)_K$ are defined in Definition \ref{def_trans}.
In particular, there exists a countable set of proper subvarieties of $X$:
\[S=\{V\subset X\mid V\text{ is a closed subvariety defined over }K\}\cup \{I(f^n)\mid n\in\mb{Z}_+\},\]
 such that for all $x\in X(\mathbf{k})\setminus \bigcup\limits_{V\in S}V(\mathbf{k})$, the arithmetic degrees $\ov{\alpha}(f,x)$ and $\underline{\alpha}(f,x)$ are constants.
\end{coro}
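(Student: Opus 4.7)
The plan is to deduce the first assertion from Theorem \ref{theo_arithdeg_Galois_orbit} after producing an element $\sigma\in\mr{Gal}(\mb{C}/K)$ with $\sigma(x)=y$ via Lemma \ref{lemm_trans} applied to the common Zariski closure $W_K:=Z(x)_K=Z(y)_K$, and then derive the ``in particular'' clause by a straightforward countability argument on proper $K$-subvarieties.

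For the first assertion, I would first observe that $W_K\subset X_K$ is integral, being the Zariski closure of the single (not necessarily closed) point $\pi_K(x)$, and hence is a quasi-projective $K$-variety in the sense of the paper. By construction $\pi_K(x)$ is the generic point of $W_K$, so $x$ is contained in no proper $K$-subvariety of $W_K$; that is, $x$ is a purely transcendental point of $W:=W_K\times_K\mb{C}$ with respect to $W_K/K$, and similarly for $y$. Then Lemma \ref{lemm_trans} applied to $W_K$ supplies $\sigma\in\mr{Gal}(\mb{C}/K)$ with $\sigma(x)=y$ as $\mb{C}$-points of $W$, hence also as $\mb{C}$-points of $X$ via the closed immersion $W\hookrightarrow X$. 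Since $x,y\in X(\mb{C})_f$ by hypothesis, Theorem \ref{theo_arithdeg_Galois_orbit} then yields $\ov{\alpha}(f,x)=\ov{\alpha}(f,y)$ and $\underline{\alpha}(f,x)=\underline{\alpha}(f,y)$.

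For the ``in particular'' statement, I would take $S$ to consist of the (reduced) irreducible components of every proper closed $K$-subscheme of $X_K$, together with the irreducible components of the indeterminacy loci $I(f^n)$ for all $n\geq 0$. Since $K$ is finitely generated over $\mb{Q}$ it is countable, and so any finitely generated $K$-algebra has countably many ideals; by the Hilbert basis theorem and the quasi-compactness of $X_K$, the proper closed subschemes of $X_K$ form a countable family, and hence so does $S$. For any $x\in X(\mb{C})\setminus\bigcup_{V\in S}V(\mb{C})$, the condition $x\notin I(f^n)(\mb{C})$ for every $n$ gives $x\in X(\mb{C})_f$, while the fact that $\pi_K(x)$ avoids every proper $K$-subvariety of $X_K$ forces $Z(x)_K=X_K$. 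The first part of the corollary then shows that $\ov{\alpha}(f,x)$ and $\underline{\alpha}(f,x)$ take a constant value on the complement of $\bigcup_{V\in S}V(\mb{C})$.

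The whole argument is essentially a packaging of Lemma \ref{lemm_trans} and Theorem \ref{theo_arithdeg_Galois_orbit}, so I do not anticipate a serious obstacle. The only step demanding a small verification is checking that $W_K$ satisfies the hypotheses of Lemma \ref{lemm_trans}, namely integrality and the purely transcendental property for both $x$ and $y$; both follow at once from the definition of $Z(x)_K$ as the Zariski closure of $\pi_K(x)$.
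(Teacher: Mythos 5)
Your proposal is correct and follows essentially the same route as the paper: apply Lemma \ref{lemm_trans} to the common closure $Z(x)_K=Z(y)_K$ to produce $\sigma\in\mr{Gal}(\mb{C}/K)$ with $\sigma(x)=y$, then invoke Theorem \ref{theo_arithdeg_Galois_orbit}, and for the ``in particular'' clause take $S$ to consist of proper closed $K$-subvarieties together with the loci $I(f^n)$. The only difference is that you spell out why the collection of proper closed $K$-subvarieties is countable, a point the paper states without justification.
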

\begin{proof}
Suppose $X$ and $f$ are defined over a finitely generated field $K$ over $\mb{Q}$. Then there exists a normal projective variety $X_K$ over $K$ such that $X=X_K\times_{\Spe K}\Spe \mathbf{k}$. Denote $Y=Z(x)$ and $Y_{K}=Z(x)_{K}$, then $x,y$ are purely transcendental points with respect to $Y_{K}/K$. Applying Proposition \ref{prop_trans}
to $Y$, there exists a $K$-automorphism $\sigma\in \mr{Gal}(\mathbf{k}/K)$ such that $\sigma(x)=y$. Then by Theorem \ref{theo_arithdeg_Galois_orbit}, their arithmetic degrees are equal. 

Let $S_1=\{V\subset X\mid V\text{ is a closed subvariety defined over }K\}.$
Let $I(f)$ be the indeterminacy locus of $f$. We define $S_2=\{I(f^n)\mid n\in\mb{Z}_+\}.$ Now let $S=S_1\cup S_2$, which forms a countable set. $X(\mathbf{k})\setminus \bigcup\limits_{V\in S}V(\mathbf{k})$ consists of purely transcendental points with respect to $X_{K}\ra \Spe K$. According to the argument above, their arithmetic degrees are equal. 
\end{proof}

\begin{rema}
A direct consequence of Corollary \ref{coro_arithdeg_constant} is that the set
\[\left\{\ov{\alpha}(f,x)\mid x\in X_f(\mathbf{k})\right\}\cup \left\{\underline{\alpha}(f,x)\mid x\in X_f(\mathbf{k})\right\}\]
is countable. This is because there are only countably many subvarieties $Z(x)_K$ for a fixed finitely generated field $K$ over $\mb{Q}$. If $f$ is an endomorphism, it is a finite set of algebraic integers, see Proposition \ref{prop_finite_set}.
\end{rema}

For a rational self-map $f: X\dra X$, determining whether $\alpha(f,x)=\lambda_1(f)$ for $x\in X(\mathbf{k})$ is generally difficult. However, in the case where $f$ is an endomorphism, the limit defining $\alpha(f,x)$ exists (see Definition \ref{arith_deg_fgf}). Furthermore, we can find a point $x_0\in X(\mathbf{k})$ which attains the maximal arithmetic degree. By combining this result with Corollary \ref{coro_arithdeg_constant}, we can show that $\alpha(f,x)=\lambda_1(f)$ for very general $x\in X(\mathbf{k})$. In other words,  the equality $\alpha(f,x)=\lambda_1(f)$ holds for $x\in X(\mathbf{k})$ outside a countable union of proper subvarieties.

\begin{lemm}\label{lemm_boundeddeg}
Let $X$ be a quasi-projective variety over $\mathbf{k}$. Suppose $X$ is defined over a subfield $K\subset \mathbf{k}$. Then for any $L=K(\beta)\subset\mathbf{k}$ satisfying $\beta\notin \ov{K}$, there exists an integer $N>0$ such that
\[\{x\in X_L(\ov{L})\mid x \text{ is transcendental with respect to }X_K\ra \Spe K, [L(x):L]<N\}\]
has infinitely many elements.
\end{lemm}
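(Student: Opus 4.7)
The strategy is to produce the desired infinite family explicitly using Noether normalization, which yields a finite cover of $\mb{A}^d_K$ by (an affine open of) $X_K$ and lets one read off transcendence degrees via a simple evaluation at $\beta$. I tacitly assume $d := \dim X \geq 1$; if $d=0$, every closed point of $X$ has coordinates algebraic over $K$, so no transcendental points exist at all.

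First I would choose a non-empty affine open $U \subset X_K$ defined over $K$, set $R = K[U]$, and apply Noether normalization to produce a finite injective $K$-algebra map $K[t_1,\ldots,t_d] \hookrightarrow R$, i.e.\ a finite surjective $K$-morphism $\phi : U \to \mb{A}^d_K$. Let $N$ be any integer strictly greater than the minimal number of $K[t_1,\ldots,t_d]$-generators of $R$. Then for every field extension $\kappa/K$ and every $y \in \mb{A}^d(\kappa)$, the scheme-theoretic fibre $\phi_\kappa^{-1}(y)$ has $\kappa$-length $< N$, so any closed point $x$ of this fibre satisfies $[\kappa(x):\kappa] < N$. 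For each $n \in \mb{Z}_{\geq 0}$, put $y_n = (\beta + n, 0, \ldots, 0) \in \mb{A}^d(L)$, and choose any closed point $x_n$ of $\phi_L^{-1}(y_n)$; such an $x_n$ exists because $\phi_L$ is surjective. Then $x_n \in X_L(\ov L)$ with $[L(x_n):L] < N$, and the $x_n$ are pairwise distinct because their images $y_n \in \mb{A}^d_L(L)$ are.

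The main point to verify is that each $x_n$ is transcendental with respect to $X_K/K$. Writing $\pi_K : X_L \to X_K$ and $\pi_K^{\mb{A}} : \mb{A}^d_L \to \mb{A}^d_K$ for the two projections, commutativity of the base-change square gives $\phi(\pi_K(x_n)) = \pi_K^{\mb{A}}(y_n)$. The scheme point $\pi_K^{\mb{A}}(y_n)$ corresponds to the kernel of the evaluation $K[t_1,\ldots,t_d] \to L$, $t_1 \mapsto \beta + n$, $t_i \mapsto 0$ for $i \geq 2$; since $\beta + n$ is transcendental over $K$, this kernel equals $(t_2,\ldots,t_d)$, which defines a $1$-dimensional subvariety of $\mb{A}^d_K$ with residue field $K(t_1)$. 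Because $\phi$ is finite, the residue field $\kappa(\pi_K(x_n))$ is a finite extension of $K(t_1)$, and in particular has transcendence degree $1$ over $K$. Hence $Z(x_n)_K = \ov{\{\pi_K(x_n)\}}$ has dimension $\geq 1$, so $x_n$ is transcendental. I do not expect a real obstacle here: once Noether normalization is in place, the whole argument reduces to tracking residue fields through the commutative base-change square and the finite morphism $\phi$.
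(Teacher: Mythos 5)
Your proof is correct and follows essentially the same approach as the paper's: reduce to an affine chart, apply Noether normalization $\phi\colon U_K\to\mb{A}^d_K$, bound $[L(x):L]$ by the number of module generators of the coordinate ring over $K[t_1,\dots,t_d]$, and pull back infinitely many transcendental $L$-points of $\mb{A}^d$ (such as $(\beta+n,0,\dots,0)$) to closed points of bounded degree whose transcendence is preserved under the finite cover. You spell out the residue-field bookkeeping for the transcendence propagation more explicitly than the paper does, but the argument is the same.
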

\begin{proof}
Without loss of generality, we assume $X=\Spe R$ is affine, where $R=\mathbf{k}[x_1,\dots,x_n]/(f_1,\dots,f_m)$ and $f_1,\dots,f_m\in K[x_1,\dots,x_n]$. By definition, we have $X_K=\Spe K[x_1,\dots,x_n]/(f_1,\dots,f_m)$ and $X_L=\Spe L[x_1,\dots,x_n]/(f_1,\dots,f_m)$.
Suppose $\dim X=d$, then by Noether's normalization, there exists a surjective finite morphism $f_K: X_K\ra \mb{A}^d_K$, which induces a surjective finite morphism $f: X\ra \mb{A}^d$. There exists an integer $N>0$ such that the set
\[\{x\in \mb{A}^d(L)\mid x\text{ is transcendental with respect to }\mb{A}^d_K\ra \Spe K\}\]
has infinite elements. Since the morphism $f_K$ is finite, $R_L=L[x_1,\dots,x_n]/(f_1,\dots,f_m)$ is a finitely generated $L[y_1,\dots,y_d]$-module, which is generated by $r$ elements for some integer $r$. For any $x\in X(\ov{L})$ with image $y=f(x)$, we have
\[ [L(x):L]\leq r[L(y):L].\]
Furthermore, if $y$ is transcendental with respect to $\mb{A}^d_K\ra \Spe K$, it follows that $x$ is transcendental with respect to $X_K\ra \Spe K$.
Thus the set
\[\{x\in X_L(\ov{L})\mid x \text{ is transcendental with respect to }X_K\ra \Spe K, [L(x):L]<r\}\]
has infinitely many elements.
\end{proof}

\begin{theo}\label{theo_verygeneral}
Let $X$ be a normal projective variety and $f: X\ra X$ be an endomorphism over $\mathbf{k}$. Suppose $X$ and $f$ are defined over a finitely generated field $K\subset \mathbf{k}$ over $\mb{Q}$. Then 
for any purely transcendental point $x\in X(\mathbf{k})$ with respect to $X_{\ov{K}}\ra \Spe \ov{K}$, we have $\alpha(f,x)=\lambda_1(f)$. In particular, there exist countably many proper closed subvarieties $\{V_i\mid i\in\mb{Z}_+\}$ of $X$ such that $\alpha(f,x)=\lambda_1(f)$ for all $x\in X(\mathbf{k})\setminus \bigcup\limits_{i\in \mb{Z}_+}V_i(\mathbf{k})$.
\end{theo}
\begin{proof}
We may assume $\lambda_1(f)>1$.

Suppose $X$ and $f$ are defined over a finitely generated field $K$ over $\mb{Q}$. Let 
\[S:=\{V\subset X\mid V\text{ is a closed subvariety defined over }K\}.\]
For all $x,y\in X(\mathbf{k})\setminus \bigcup\limits_{V\in S}V(\mathbf{k})$, we have $\alpha(f,x)=\alpha(f,y)$. It suffices to find $x_0\in X(\mathbf{k})\setminus \bigcup\limits_{V\in S}V(\mathbf{k})$ such that $\alpha(f,x_0)=\lambda_1(f)$.

Firstly, we choose ample divisors $H_1,\dots,H_{d-1}$ over $X$ such that $C=H_1\cap \dots \cap H_{d-1}$ is an irreducible curve and $C$ is not contained in any subvariety $V\in S$.  Since $f^*$ preserve the nef cone, there exists a nef $\mb{R}$-divisor $D$ such that $f^*D\equiv \lambda_1(f)D$. Then we have $(D\cdot H_1\cdots H_{d-1})>0$ by \cite[Lemma 20]{KS2016dyna}. Thus $D|_C$ is ample.

Suppose $X,f, C$ and $D$ are defined over a finitely generated field $K_1$ over $\mb{Q}$.
The set $C(\mathbf{k})\cap \bigcup\limits_{V\in S}V(\mathbf{k})$ is countable. Suppose
\[C(\mathbf{k})\cap \bigcup\limits_{V\in S}V(\mathbf{k})=\{x_1,x_2,\dots\},\]
where $x_1,x_2,\dots$ are defined over $K_2$, which is a countably generated extension over $K_1$. 
Since the cardinality of $\ov{K_2}$ is countable, we can choose an element $\beta\in \mathbf{k}\backslash\ov{K_2}$. Then we have $\ov{K_1(\beta)}\cap \ov{K_2}=\ov{K_1}$. Let $K_3=K_1(\beta).$

We fix an ample polarization $\ov{B}=(B,\ov{H})$ of $K_3$ and a Moriwaki height
$h_{\ov{D}}$. For simplicity, we continue to denote $D_{K_3}$ by $D$ here. Since $D|_C$ is ample, we may assume $h_{\ov{D}}|_{C_{K_3}}\geq 1$. Consider the canonical height
\[\widehat{h}_D:=\lim_{n\ra\infty}\frac{h_{\ov{D}}\circ f_{K_3}^n}{\lambda_1(f_{K_3})^n}.\]
By Corollary \ref{coro_canonicalheight}, the limit converges and we have
\[\widehat{h}_D=h_{\ov{D}}+O(\sqrt{h_{\ov{A}}})\]
for an ample divisor $A$ and a height function $h_{\ov A}\geq 1$. Thus there exist constants $0<M_1<M_2$ such that 
\[\widehat{h}_D(x)\geq  h_{\ov{D}}(x)-M_1\sqrt{h_{\ov{A}}(x)}\geq h_{\ov{D}}(x)-M_2\sqrt{h_{\ov{D}}(x)}, \forall x\in C_{K_3}(\ov{K_3}).\]
If $\widehat{h}_D(x)>0$, then $\alpha(f,x)=\lambda_1(f)$. It follows that
\[
\begin{aligned}
    \{x\in C_{K_3}(\ov{K_3})\mid \alpha(f,x)<\lambda_1(f)\}&\subset \{x\in C_{K_3}(\ov{K_3})\mid \widehat{h}_D(x)\leq 0\}\\
    &\subset  \{x\in C_{K_3}(\ov{K_3})\mid h_{\ov{D}}(x)-M_2\sqrt{h_{\ov{D}}(x)}\leq 0\}\\
    &\subset  \{x\in C_{K_3}(\ov{K_3})\mid h_{\ov{D}}(x)\leq M_2^2\}.
\end{aligned}\]
According to Lemma \ref{lemm_boundeddeg}, there exists a constant $N>0$ such that the set
\[T=\{x\in C(\ov{K_3})\mid  x \text{ is transcendental with respect to }C_{K_1}\ra \Spe K_1, [K_3(x):K_3]<N\}\]
contains infinitely many elements. By the Northcott property, there exists $x_0\in T$ such that $h_{\ov{D}}(x_0)>M^2$, which implies $\alpha(f,x_0)=\lambda_1(f)$. Moreover, the point $x_{0,\mathbf{k}}\in X(\mathbf{k})$ that is induced by $x_0$ is not contained in $C(\mathbf{k})\cap \bigcup\limits_{V\in S}V(\mathbf{k})$, as this would imply that $x_0$ is defined over $\ov{K_2}\cap \ov{K_3}=\ov{K_1}$, contradicting the transcendence of $x_0$. Consequently, $\alpha(f,x)=\lambda_1(f)$ for all $x\in X(\mathbf{k})\setminus \bigcup\limits_{V\in S}V(\mathbf{k})$.
\end{proof}

\begin{rema}
If $\mathbf{k}$ is an uncountable field of characteristic $0$, the Zariski dense orbit conjecture holds as proved in \cite{AC2008Fibre}. Specifically, assuming $\mathbf{k}(X)^f=\mathbf{k}$, then there exist countably many proper closed subvarieties $\{V_i\mid i\in\mb{Z}_+\}$ of $X$ such that for all $x\in X(\mathbf{k})\setminus \bigcup_{i}V_i(\mathbf{k})$, the $f$-orbit $\mO_f(x)$ is Zariski dense. Moreover, if we further assume the Kawaguchi--Silverman conjecture holds, we also conclude that $\alpha(f,x)=\lambda_1(f)$ for all $x\in X(\mathbf{k})\setminus \bigcup_{i}V_i(\mathbf{k})$.
\end{rema}

\end{document}